\DeclareSymbolFont{cyrletters}{OT2}{wncyr}{m}{n}
\DeclareMathSymbol{\Sha}{\mathalpha}{cyrletters}{"58}
\newtheorem{theorem}{Theorem}[section]
\newtheorem{lemma}{Lemma}[section]
\newtheorem{corollary}{Corollary}[section]
\newtheorem{definition}{Definition}[section]
\newtheorem{proposition}{Proposition}[section]
\newtheorem{remark}{Remark}[section]
\newtheorem{examples}{Example}[section]
\begin{document}

\bibliographystyle{abbrv}
	
\title{The Multiple Equal-Difference Structure of Cyclotomic Cosets}
	
\author{Li Zhu$^{1}$, Juncheng Zhou$^{2}$, Jinle Liu$^{2}$ and Hongfeng Wu$^{2}$\footnote{Corresponding author.}
\setcounter{footnote}{-1}
\footnote{E-Mail addresses:
lizhumath@pku.edu.cn(L. Zhu), 2024312050101@mail.ncut.edu.cn(J. Zhou), cohomologyliu@163.com(J. Liu), whfmath@gmail.com(H. Wu)}
\\
{1.~School of Mathematical Sciences, Guizhou Normal University, Guiyang, China}
\\
{2.~College of Science, North China University of technology, Beijing, China}}
	
\date{}
\maketitle
	
\thispagestyle{plain}
\setcounter{page}{1}	
	
\begin{abstract}
	In this paper we introduce the definition of equal-difference cyclotomic coset, and prove that in general any cyclotomic coset can be decomposed into a disjoint union of equal-difference subsets. Among the equal-difference decompositions of a cyclotomic coset, an important class consists of those in the form of cyclotomic decompositions, called the multiple equal-difference representations of the coset. There is an equivalent correspondence between the multiple equal-difference representations of $q$-cyclotomic cosets modulo $n$ and the irreducible factorizations of $X^{n}-1$ in binomial form over finite extension fields of $\mathbb{F}_{q}$. We give an explicit characterization of the multiple equal-difference representations of any $q$-cyclotomic coset modulo $n$, through which a criterion for $X^{n}-1$ factoring into irreducible binomials is obtained. In addition, we present an algorithm to simplify the computation of the leaders of cyclotomic cosets.\\

{\bf KeyWords.}  Cyclotomic coset, equal-difference cyclotomic coset, multiple equal-difference representations, finite fields\\

{\bf Mathematics Subject Classification (2000)}  11A07, 11T55, 11T71, 11H71, 12Y05.
\end{abstract}
	
\section{Introduction}
Cyclotomic coset is a classical notion in the theory of finite fields. As they are closely related to the factorizations of polynomials over finite fields, cyclotomic cosets are widely involved in the problems from, for instance, finite fields, coding theory, cryptography and computational number theory. In particular, cyclotomic cosets play an important role in the theory of constacyclic codes. In practice, one often need to compute concretely the parameters associated to cyclotomic cosets, such as the representatives, leaders, sizes and enumerations of the cosets.

Many results have been achieved under specific conditions, some of which we list as follows. Being restricted by the authors' knowledge, the list is limited. In a series of papers, for instance \cite{Chen}, \cite{Chen 2}, \cite{Sharma}, \cite{Liu}, \cite{Sharma 2}, \cite{Liu 2}, \cite{Wu-Yue}, \cite{Wu 2}, etc., the representatives and sizes of the corresponding classes of cyclotomic cosets are determined in order to calculate certain constacyclic codes. In \cite{Chen 3} and \cite{Wang} the $q$-cyclotomic cosets contained in the subset $1+r\mathbb{Z}/nr\mathbb{Z}$ of $\mathbb{Z}/nr\mathbb{Z}$, where $\mathrm{gcd}(q,nr)=1$ and $r \mid q-1$, are characterized and enumerated. Applying the results on cyclotomic cosets, \cite{Chen 3} gives the enumeration of Euclidean self-dual codes, and \cite{Wang} construct several classes of $p^{h}$-LCD MDS codes. And in \cite{Yue} and \cite{Geraci}, concerning with stream cipher $m$-sequences and problems in statistic physics respectively, algorithms to calculate the leader of cyclotomic cosets are given.

This paper is devoted to introduce the definition of equal-difference cyclotomic coset and characterize the multiple equal-difference structure of a general coset. Intuitively, a $q$-cyclotomic coset modulo $n$ is of equal difference if it can be presented as a complete arithmetic sequence lying in $\{0,1,\cdots,n-1\}$. In Section \ref{sec 2} we investigate the basic properties of equal-difference cyclotomic cosets; particularly, the criterion for a coset to be equal difference, and that for all $q$-cyclotomic cosets modulo $n$ to be of equal difference are given respectively.

In general, a cyclotomic coset $c_{n/q}(\gamma)$ is not necessarily of equal difference, however, it turns out that $c_{n/q}(\gamma)$ can be expressed as a disjoint union of equal-difference subsets. Among the equal-difference decompositions of $c_{n/q}(\gamma)$, an important class, called the multiple equal-difference representations of $c_{n/q}(\gamma)$, come from the $q^{t}$-cyclotomic decomposition for $t \in \mathbb{N}^{+}$, that is, they are of the form
$$c_{n/q}(\gamma) = \bigsqcup_{j=0}^{\mathrm{gcd}(t,\tau)-1}c_{n/q^{t}}(\gamma q^{j}).$$
In Section \ref{sec 3}, we determine all the equal-difference decompositions of any cyclotomic coset, and give an explicit characterization to the class of multiple equal-difference representations.

For any prime power $q$ and positive integer $n$ coprime to $q$, the $q$-cyclotomic cosets modulo $n$ can be seen as the Frobenius orbits of the $n$-th roots of unity over $\mathbb{F}_{q}$, therefore they correspond exactly to the irreducible factors of $X^{n}-1$ over $\mathbb{F}_{q}$. In Section \ref{sec 4} we prove that a coset $c_{n/q}(\gamma)$ is of equal difference if and only if the corresponding polynomial $M_{c_{n/q}(\gamma)}(X)$ is a binomial. Moreover, in general, the multiple equal-difference representations of a cyclotomic coset $c_{n/q}(\gamma)$ are in an anti-order-preserving one-to-one correspondence with the irreducible factorizations of $M_{c_{n/q}(\gamma)}(X)$ in binomial form over finite extension fields of $\mathbb{F}_{q}$. In this way, the multiple equal-difference structure of cyclotomic cosets encodes significant information on the factorizations of $X^{n}-1$. Based on the properties of equal-difference cosets, we prove a criterion on $q$ and $n$ for $X^{n}-1$ factoring into irreducible binomials over $\mathbb{F}_{q}$.

In Section \ref{sec 5}, as an application we present an algorithm that simplify the computation of the leaders of cyclotomic cosets. In particular, the leader of any equal-difference coset is determined. We expect that this algorithm can be applied in the problems from coding theory and cryptography.

\section{Preliminaries}\label{sec 1}
In this section we fix the basic notations and recall some facts that are needed in the following context.
\subsection{Basic number theory}
Let $n$ be a positive integer with the prime decomposition $n = p_{1}^{e_{1}}\cdots p_{s}^{e_{s}}$, where $p_{1},\cdots,p_{s}$ are distinct primes, and $e_{1},\cdots,e_{s}$ are positive integers. The radical of $n$ is defined by $\mathrm{rad}(n) = p_{1}\cdots p_{s}$.

If $m$ and $n$ are coprime integers, we denote by $\mathrm{ord}_{n}(m)$ the order of $m$ in the multiplicative group $(\mathbb{Z}/n\mathbb{Z})^{\ast}$, i.e., the smallest positive integer such that 
$$m^{\mathrm{ord}_{n}(m)} \equiv 1 \pmod{n}.$$
It is clear that $\mathrm{ord}_{n}(m)$ divides the order $\phi(n)$ of $(\mathbb{Z}/n\mathbb{Z})^{\ast}$.

Let $\ell$ be a prime. Denote by $v_{\ell}(n)$ the $\ell$-adic valuation of $n$, i.e., the maximal integer such that $\ell^{v_{\ell}(n)} \mid n$. The following lift-the-exponent lemmas are well-known.
	
\begin{lemma}{\cite{Nezami}}\label{lem 3}
	Let $\ell$ be an odd prime number, and $m$ be an integer such that $\ell \mid m-1$. Then $v_{\ell}(m^{d}-1) = v_{\ell}(m-1) + v_{\ell}(d)$ for any positive integer $d$.
\end{lemma}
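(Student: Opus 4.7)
The plan is to prove this by a three-step reduction: handle $d = \ell$ first, bootstrap to $d = \ell^k$ by induction, and finally dispatch general $d$ by peeling off the prime-to-$\ell$ part. This is the classical strategy for LTE and the only delicate step is the first.

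For the base case $d = \ell$, I would write $m = 1 + \ell^{a} u$ with $a = v_{\ell}(m-1) \geq 1$ and $\gcd(u,\ell)=1$, and factor
$$m^{\ell}-1 = (m-1)\sum_{i=0}^{\ell-1} m^{i}.$$
The goal is to show the sum has $\ell$-adic valuation exactly $1$. Expanding each $m^{i}=(1+\ell^{a}u)^{i}$ binomially and reducing modulo $\ell^{2a}$ gives $m^{i}\equiv 1 + i\ell^{a}u \pmod{\ell^{2a}}$, since every higher binomial term already carries a factor $\ell^{2a}$. Summing,
$$\sum_{i=0}^{\ell-1} m^{i} \equiv \ell + u\ell^{a}\cdot\frac{\ell(\ell-1)}{2}\pmod{\ell^{2a}}.$$
Here the assumption that $\ell$ is \emph{odd} is used to guarantee that $(\ell-1)/2$ is an integer coprime to $\ell$, so the second summand has valuation $a+1\geq 2$, while the first has valuation exactly $1$. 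Thus $v_{\ell}\bigl(\sum_{i} m^{i}\bigr)=1$, giving $v_{\ell}(m^{\ell}-1)=a+1=v_{\ell}(m-1)+v_{\ell}(\ell)$.

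For $d=\ell^{k}$, I would induct on $k$, applying the base case to $m^{\ell^{k-1}}$ (which still satisfies $\ell\mid m^{\ell^{k-1}}-1$), so each additional factor of $\ell$ in the exponent raises the valuation by exactly one. For a general positive integer $d$, write $d=\ell^{k}d'$ with $\gcd(d',\ell)=1$ and factor
$$m^{d}-1 = \bigl(m^{\ell^{k}}-1\bigr)\sum_{j=0}^{d'-1} m^{\ell^{k}j}.$$
Since $m^{\ell^{k}}\equiv 1\pmod{\ell}$, the sum is $\equiv d'\pmod{\ell}$, hence has $\ell$-adic valuation $0$. Combining with the previous step gives $v_{\ell}(m^{d}-1)=v_{\ell}(m-1)+k=v_{\ell}(m-1)+v_{\ell}(d)$.

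The only real obstacle is the base-case congruence: one has to truncate the binomial expansion to the right modulus ($\ell^{2a}$ rather than $\ell^{a+1}$) so that the term containing $\binom{\ell}{2}$ survives and can be shown to have valuation $a+1$ — crucially using that $\ell$ is odd. Once this is in place, the remaining steps are purely formal manipulations.
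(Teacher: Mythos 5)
Your proof is correct and complete: the base case $d=\ell$ via the truncated binomial expansion modulo $\ell^{2a}$ (where $a=v_{\ell}(m-1)\geq 1$ guarantees both the error term and the $\binom{\ell}{2}$-term have valuation at least $2$), the induction to $d=\ell^{k}$, and the reduction of general $d$ by the prime-to-$\ell$ cofactor are all sound; this is the standard lifting-the-exponent argument. Note that the paper itself gives no proof of this lemma --- it is quoted from the cited reference --- so there is nothing to compare against beyond observing that your argument is the classical one.
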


\begin{lemma}{\cite{Nezami}}\label{lem 2}
	Let $m$ be an odd integer, and $d$ be a positive integer.
	\begin{description}
		\item[(1)] If $m \equiv 1 \pmod{4}$, then
		$$v_{2}(m^{d}-1) = v_{2}(m-1) + v_{2}(d), \  v_{2}(m^{d}+1) = 1.$$
		\item[(2)] If $m \equiv 3 \pmod{4}$ and $d$ is odd, then
		$$v_{2}(m^{d}-1) = 1, \  v_{2}(m^{d}+1) = v_{2}(m+1).$$
		\item[(3)] If $m \equiv 3 \pmod{4}$ and $d$ is even, then
		$$v_{2}(m^{d}-1) = v_{2}(m+1) + v_{2}(d), \  v_{2}(m^{d}+1) = 1.$$
	\end{description}
\end{lemma}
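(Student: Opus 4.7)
The plan is to reduce each case via a factorization argument to the situation where the exponent is odd, and then invoke an elementary observation on sums of odd terms.

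First I would record the auxiliary fact that for any odd integer $m$ and any odd positive integer $d$, both $v_{2}(m^{d} - 1) = v_{2}(m-1)$ and $v_{2}(m^{d} + 1) = v_{2}(m+1)$. This follows from the identities
$$m^{d} - 1 = (m-1)\sum_{i=0}^{d-1}m^{i}, \qquad m^{d} + 1 = (m+1)\sum_{i=0}^{d-1}(-m)^{i},$$
since each summand on the right is odd and the number of summands $d$ is odd, which forces the cofactor to be odd. This observation yields part (2) at once: when $m \equiv 3 \pmod{4}$ and $d$ is odd, $m^{d} \equiv 3 \pmod{4}$ gives $v_{2}(m^{d}-1) = 1$, and $v_{2}(m^{d}+1) = v_{2}(m+1)$ follows from the auxiliary fact.

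For part (1), assuming $m \equiv 1 \pmod{4}$, I would write $d = 2^{k}e$ with $e$ odd and telescope:
$$m^{d} - 1 = (m^{e} - 1)\prod_{i=0}^{k-1}\bigl(m^{2^{i}e} + 1\bigr).$$
Since $m \equiv 1 \pmod{4}$, we have $m^{2^{i}e} \equiv 1 \pmod{4}$ for every $i \geq 0$, so $v_{2}\bigl(m^{2^{i}e}+1\bigr) = 1$ for every factor in the product; combined with $v_{2}(m^{e}-1) = v_{2}(m-1)$ from the auxiliary fact, this delivers $v_{2}(m^{d}-1) = v_{2}(m-1) + k = v_{2}(m-1) + v_{2}(d)$. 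The companion identity $v_{2}(m^{d}+1) = 1$ is immediate from $m^{d} \equiv 1 \pmod{4}$.

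Part (3) then reduces to part (1) by the substitution $m \mapsto m^{2}$: when $m \equiv 3 \pmod{4}$ and $d$ is even, $m^{2} \equiv 1 \pmod{4}$, so writing $d = 2d'$ and applying part (1) to $m^{2}$ and $d'$ yields $v_{2}\bigl((m^{2})^{d'}-1\bigr) = v_{2}(m^{2}-1) + v_{2}(d')$. Using $v_{2}(m^{2}-1) = v_{2}(m-1) + v_{2}(m+1) = 1 + v_{2}(m+1)$ (since $m \equiv 3 \pmod{4}$ forces $v_{2}(m-1) = 1$) together with $v_{2}(d') = v_{2}(d) - 1$ produces the first formula, and the second, $v_{2}(m^{d}+1) = 1$, is read off directly from part (1) applied to $m^{2}$. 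The one step that really requires attention is the clean separation of the $2$-adic contributions across the telescoping factors in part (1): the hypothesis $m \equiv 1 \pmod{4}$ (or its $m \mapsto m^{2}$ surrogate in part (3)) is indispensable, because without it a factor $m^{2^{i}e} + 1$ could pick up extra powers of $2$ and the additive formula would fail.
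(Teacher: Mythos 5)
The paper does not prove this lemma at all: it states it as a well-known lifting-the-exponent result and cites \cite{Nezami}, so there is no in-paper argument to compare against. Your proof is correct and complete as a self-contained substitute: the auxiliary fact about odd exponents is justified properly (a sum of $d$ odd terms is odd when $d$ is odd), the telescoping factorization $m^{2^{k}e}-1=(m^{e}-1)\prod_{i=0}^{k-1}(m^{2^{i}e}+1)$ is the standard device, and the reduction of part (3) to part (1) via $m\mapsto m^{2}$, using $v_{2}(m^{2}-1)=1+v_{2}(m+1)$ when $m\equiv 3\pmod{4}$, is handled with the right bookkeeping of $v_{2}(d')=v_{2}(d)-1$. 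No gaps.
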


\subsection{Finite fields}
Let $\mathbb{F}_{q}$ be a finite field with $q$ elements, where $q$ is a prime power. The multiplicative group $\mathbb{F}_{q}^{\ast}$ of nonzero elements is a cyclic group. For any $\lambda \in \mathbb{F}_{q}^{\ast}$, the smallest positive integer $r$ such that $\lambda^{r} = 1$ is called the order of $\lambda$ and is denoted by $r = \mathrm{ord}(\lambda)$. It is obvious that $\mathrm{ord}(\lambda)$ is a divisor of $q-1$. 

Let $f$ be an irreducible polynomial over $\mathbb{F}_{q}$ which is not equal to $aX$ for any $a \neq 0$. The order $\mathrm{ord}(f)$ of $f$ is defined to be the smallest positive integer $r$ such that $f \mid X^{r}-1$. One can verify that $\mathrm{ord}(f) = \mathrm{ord}(\alpha)$ for any root $\alpha$ of $f$ lying in the finite extension field $\mathbb{F}_{q}[X]/(f)$ of $\mathbb{F}_{q}$.

For any positive integer $n$ coprime to $q$, there are $n$ roots of $X^{n}-1$, lying in some extension field of $\mathbb{F}_{q}$, that form a cyclic group $\mu_{n}$. Any generator of $\mu_{n}$ is called a primitive $n$-th root of unity. In this paper, we fix a compatible family
$$\zeta = \{\zeta_{n} \ | \ \mathrm{gcd}(n,q) = 1\}$$
of primitive roots of unity, where the compatibility means that for any positive integer $m$ and $n$ such that $\mathrm{gcd}(mn,q)=1$ and $m \mid n$, it holds that $\zeta_{n}^{\frac{n}{m}} = \zeta_{m}$.

Let $n$ be a positive integer coprime to the prime power $q$. Given any $\gamma \in \mathbb{Z}/n\mathbb{Z}$, the $q$-cyclotomic coset modulo $n$ containing $\gamma$ is defined to be 
$$c_{n/q}(\gamma) = \{\gamma,\gamma q,\cdots,\gamma q^{\tau-1}\} \subseteq \mathbb{Z}/n\mathbb{Z},$$
where $\tau$, called the size of $c_{n/q}(\gamma)$, is the smallest positive integer such that $\gamma q^{\tau}\equiv\gamma\pmod n$. Any element in $c_{n/q}(\gamma)$ is called a representative of the coset $c_{n/q}(\gamma)$. When it makes no confusion, we often do not distinguish an element in $\mathbb{Z}/n\mathbb{Z}$ with its any primage in $\mathbb{Z}$. In particular, if viewing each representative of $c_{n/q}(\gamma)$ as a nonnegative integer less than $n$, the smallest one is called the leader of $c_{n/q}(\gamma)$. Moreover, we denote the space of all $q$-cyclotomic cosets modulo $n$ by $\mathcal{C}_{n/q}$.

It is well-known that the $q$-cyclotomic cosets modulo $n$ fully determine the irreducible factorization of $X^{n}-1$ over $\mathbb{F}_{q}$, which, however, depends on the choice of a primitive $n$-th root of unity. In fact, a different choice gives rise to a permutation of the irreducible factors of $X^{n}-1$. Throughout this paper, we always choose the $n$-th primitive root $\zeta_{n}$ lying in $\zeta$ so that any $q$-cyclotomic coset $c_{n/q}(\gamma) = \{\gamma,\gamma q,\cdots,\gamma q^{\tau-1}\}$ induces the irreducible factor 
$$M_{c_{n/q}(\gamma)}(X) = (X-\zeta_{n}^{\gamma})(X-\zeta_{n}^{\gamma q})\cdots(X-\zeta_{n}^{\gamma q^{\tau-1}})$$
of $X^{n}-1$ over $\mathbb{F}_{q}$.

\section{Equal-difference cyclotomic cosets}\label{sec 2}
Let $q$ be a power of a prime $p$, and $n$ be a positive integer not divisible by $p$. This section is devoted to define equal-difference cyclotomic coset and investigate the basic property. In particular, we give an equivalent characterization of the equal-difference cyclotomic cosets, and also a criterion on $q$ and $n$ for every $q$-cyclotomic coset modulo $n$ being of equal difference.

\begin{definition}\label{def 1}
	Let $\gamma \in \mathbb{Z}/n\mathbb{Z}$, and $c_{n/q}(\gamma)$ be the associated $q$-cyclotomic coset modulo $n$, with $|c_{n/q}(\gamma)| = \tau$. The coset $c_{n/q}(\gamma)$ is called an equal-difference cyclotomic coset if $\tau \mid n$ and $c_{n/q}(\gamma)$ can be presented as
	$$c_{n/q}(\gamma) = \{\gamma, \gamma+\dfrac{n}{\tau}, \cdots, \gamma+(\tau-1)\dfrac{n}{\tau}\}.$$
	The quotient $\frac{n}{\tau}$ is called the common difference of $c_{n/q}(\gamma)$. In particular, any coset containing only one element is regarded as an equal-difference coset.
\end{definition}
	
\begin{remark}
	We require the condition $\tau \mid n$ in Definition \ref{def 1} so that besides the multiplicative coset structure, an equal-difference cyclotomic coset is also equipped with a structure of a coset with respect to an additive subgroup of $\mathbb{Z}/n\mathbb{Z}$. It turns out that this condition is well-imposed as it is necessary for many properties and the multiple equal-difference structure of cyclotomic coset in general to be valid.
\end{remark}

Intuitively, an euqal-difference coset is exactly a coset which can be represented as a complete arithmetic sequence contained in the set $\{0,1,\cdots,n-1\}$, that is, there is a positive integer $d$ with which $c_{n/q}(\gamma)$ can be written as
\begin{equation}\label{eq 1}
	c_{n/q}(\gamma) = \{\gamma_{0},\gamma_{0}+d,\cdots,\gamma_{0}+(\tau-1)d\} \subseteq \{0,1,\cdots,n-1\},
\end{equation}
where $\gamma_{0}$ is the leader of $c_{n/q}(\gamma)$, and the completeness means that $\gamma_{0} + \tau d \equiv \gamma_{0} \pmod{n}$. The completeness condition implies that $c_{n/q}(\gamma)$ can be extended to an arithmetic sequence contained in $\mathbb{Z}$.

It is clear from Definition \ref{def 1} that an equal-difference coset can be written in the form \eqref{eq 1}. Conversely, let $c_{n/q}(\gamma)$ be any coset in the form \eqref{eq 1} that satisfies $\gamma_{0} + \tau d \equiv \gamma_{0} \pmod{n}$. If $\tau = 1$, the conclusion is trivial. If $\tau > 1$, noting that $n \mid \tau d$ and $0 < (\tau-1)d < n$, then we have $d = \frac{n}{\tau}$.
 
We exhibit some examples of equal-difference cyclotomic cosets as follows.

\begin{examples}
	\begin{description}
		\item[(1)] Let $q = 5$ and $n=32$. All $q$-cyclotomic cosets modulo $n$ are 
		\begin{align*}
			&\{0\}, \ \{1,5,9,13,17,21,25,29\}, \ \{2,10,18,26\}, \ \{3,7,11,15,19,23,27,31\}, \\ 
			&\{4,20\}, \ \{6,14,22,30\}, \ \{8\}, \ \{12,28\} \ \{16\}, \ \{24\}.
		\end{align*}
		They are all equal-difference cosets.
		\item[(2)] Let $q = 3$ and $n=32$. All $q$-cyclotomic cosets modulo $n$ are 
		\begin{align*}
			&\{0\}, \ \{1,3,9,11,17,19,25,27\}, \ \{2,6,18,22\}, \ \{4,12\}, \\ &\{5,7,13,15,21,23,29,31\}, \ \{8,24\}, \ \{10,14,26,30\}, \ \{16\}, \ \{20,28\}.
		\end{align*}
		Among them the cosets $\{0\}$, $\{8,24\}$ and $\{16\}$ are of equal difference, while the rest are not.
	\end{description}
\end{examples}
	
\begin{lemma}\label{lem 4}
	Let $c_{n/q}(\gamma)$ be a $q$-cyclotomic coset modulo $n$ with $| c_{n/q}(\gamma)| = \tau$. Then $c_{n/q}(\gamma)$ is of equal difference if and only if
	\begin{description}
		\item[(\romannumeral1)] $\tau \mid n$; and
		\item[(\romannumeral2)] $\gamma q \equiv \gamma \pmod{\dfrac{n}{\tau}}$.
	\end{description}
\end{lemma}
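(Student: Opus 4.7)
The plan is to prove both directions separately, with the forward direction being almost immediate from the definition, and the reverse direction hinging on an elementary induction.

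For necessity, suppose $c_{n/q}(\gamma)$ is of equal difference. Then $\tau \mid n$ holds by Definition \ref{def 1}, giving (i). Moreover, every element of $c_{n/q}(\gamma)$ is of the form $\gamma + j\cdot\frac{n}{\tau}$, hence is congruent to $\gamma$ modulo $\frac{n}{\tau}$. Since $\gamma q\in c_{n/q}(\gamma)$, this yields (ii).

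For sufficiency, set $m=\frac{n}{\tau}$ and assume $\gamma q\equiv\gamma\pmod{m}$. The key step is to upgrade this single congruence to $\gamma q^{i}\equiv\gamma\pmod{m}$ for all $i\geq 0$, which follows by a one-line induction: multiplying $\gamma q^{i}\equiv\gamma\pmod{m}$ by $q$ gives $\gamma q^{i+1}\equiv\gamma q\equiv\gamma\pmod{m}$. Consequently every element of $c_{n/q}(\gamma)=\{\gamma,\gamma q,\ldots,\gamma q^{\tau-1}\}$ lies in the set
$$S=\{x\in\mathbb{Z}/n\mathbb{Z}\ |\ x\equiv\gamma\pmod{m}\}.$$
Because $\tau\mid n$, the set $S$ has exactly $\tau$ elements, namely $\gamma,\gamma+m,\ldots,\gamma+(\tau-1)m$. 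Since $|c_{n/q}(\gamma)|=\tau=|S|$ and $c_{n/q}(\gamma)\subseteq S$, equality holds, which is precisely the defining form of an equal-difference coset with common difference $m$.

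There is no genuine obstacle here; the statement amounts to recognizing that condition (ii) is exactly the condition that makes the multiplicative orbit closed under the additive translation by $\frac{n}{\tau}$, and then matching cardinalities. The only point requiring mild care is the need for (i) so that the $\tau$ candidate elements $\gamma+jm$ are actually distinct modulo $n$; this is what forces us to include $\tau\mid n$ as a separate hypothesis rather than deducing it from (ii) alone.
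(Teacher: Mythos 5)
Your proof is correct and follows essentially the same route as the paper's: the forward direction reads off both conditions from the definition, and the reverse direction upgrades (ii) to all powers of $q$ by induction and then matches the cardinality $\tau$ of the coset against the $\tau$ residues congruent to $\gamma$ modulo $\frac{n}{\tau}$ in $\mathbb{Z}/n\mathbb{Z}$. No gaps.
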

	
\begin{proof}
	It is trivial to see that the conclusion holds for the case that $\tau=1$. In the following we assume that $\tau > 1$. If $c_{n/q}(\gamma)$ is an equal-difference cyclotomic coset, by definition it holds $\tau \mid n$. Since $\gamma q \in c_{n/q}(\gamma)$, then there is an integer $1 \leq k \leq \tau-1$ such that
	$$\gamma q \equiv \gamma + k \cdot \dfrac{n}{\tau} \pmod{n},$$
	which implies that $\gamma q \equiv \gamma \pmod{\frac{n}{\tau}}$.
	
	Conversely, suppose that $c_{n/q}(\tau)$ satisfies (\romannumeral1) and (\romannumeral2). By induction we have, for any positive integer $j$,
	$$\dfrac{n}{\tau} \mid \gamma q^{j} - \gamma.$$
	The elements $\gamma q^{j}-\gamma$, $j=0,1,\cdots,\tau-1$, are pairwise distinct in $\mathbb{Z}/n\mathbb{Z}$. Note that there are in total $\tau$ elements in $\mathbb{Z}/n\mathbb{Z}$ which are multiples of $\frac{n}{\tau}$, so they are exactly  $\gamma q^{j}-\gamma$, $j=0,1,\cdots,\tau-1$. It follows that 
	$$c_{n/q}(\gamma) = \{\gamma, \gamma+\dfrac{n}{\tau},\cdots,\gamma+(\tau-1)\dfrac{n}{\tau}\}.$$
\end{proof}

For any $q$-cyclotomic coset $c_{n/q}(\gamma)$ modulo $n$, we set $\widetilde{\gamma} = \frac{\gamma}{\mathrm{gcd}(\gamma,n)}$ and $n_{\gamma} = \frac{n}{\mathrm{gcd}(\gamma,n)}$. Then the $q$-cyclotomic coset modulo $n_{\gamma}$ containing $\widetilde{\gamma}$ is given by 
$$c_{n_{\gamma}/q}(\widetilde{\gamma}) = \{\widetilde{\gamma},\widetilde{\gamma}q,\cdots,\widetilde{\gamma}q^{\tau-1}\}.$$
There is a natural bijection
$$c_{n/q}(\gamma) \rightarrow c_{n_{\gamma}/q}(\widetilde{\gamma}): \ \gamma q^{j} \mapsto \widetilde{\gamma}q^{j}.$$
Notice that the coset $c_{n_{\gamma}/q}(\widetilde{\gamma})$ and the above bijection is independent of the choice of the representative $\gamma$. We call $c_{n_{\gamma}/q}(\widetilde{\gamma})$ the primitive form of $c_{n/q}(\gamma)$. In particular, if $\mathrm{gcd}(\gamma,n)=1$, then $c_{n/q}(\gamma)$ and $c_{n_{\gamma}/q}(\widetilde{\gamma})$ coincide. In this case $c_{n/q}(\gamma)$ is called a primitive $q$-cyclotomic coset modulo $n$.

\begin{lemma}\label{lem 1}
	Let $c_{n/q}(\gamma)$ be a $q$-cyclotomic coset modulo $n$. Then $c_{n/q}(\gamma)$ is an equal-difference cyclotomic coset if and only if its primitive form $c_{n_{\gamma}/q}(\widetilde{\gamma})$ is.
\end{lemma}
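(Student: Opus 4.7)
The plan is to reduce the statement to Lemma \ref{lem 4} and exploit the relation $n = d\,n_{\gamma}$, $\gamma = d\,\widetilde{\gamma}$, where $d = \gcd(\gamma,n)$ and $\gcd(\widetilde{\gamma}, n_{\gamma}) = 1$. First, I would note that the bijection $\gamma q^{j}\mapsto \widetilde{\gamma} q^{j}$ shows that both cosets have the same size $\tau$; equivalently, $\tau$ is the smallest positive integer with $n_{\gamma}\mid q^{\tau}-1$, which follows from $n\mid\gamma(q^{\tau}-1)$ after dividing by $d$ and using $\gcd(\widetilde{\gamma},n_{\gamma})=1$.

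By Lemma \ref{lem 4}, the claim amounts to proving the equivalence of the two conjunctions
\[
\bigl(\tau\mid n\bigr)\ \wedge\ \bigl(\tfrac{n}{\tau}\mid \gamma(q-1)\bigr)
\quad\Longleftrightarrow\quad
\bigl(\tau\mid n_{\gamma}\bigr)\ \wedge\ \bigl(\tfrac{n_{\gamma}}{\tau}\mid \widetilde{\gamma}(q-1)\bigr).
\]
The ``$\Leftarrow$'' direction is immediate: if $\tau\mid n_{\gamma}$ then $\tau\mid d\,n_{\gamma}=n$, and multiplying the divisibility $\frac{n_{\gamma}}{\tau}\mid \widetilde{\gamma}(q-1)$ by $d$ yields $\frac{n}{\tau}\mid \gamma(q-1)$. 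So the substantive content is the ``$\Rightarrow$'' direction.

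For ``$\Rightarrow$'', the key observation is that every element of $c_{n/q}(\gamma)$ has the form $\gamma q^{j}$, and because $\gcd(q,n)=1$ one has $\gcd(\gamma q^{j},n)=\gcd(\gamma,n)=d$. Hence every element of $c_{n/q}(\gamma)$, viewed in $\mathbb{Z}/n\mathbb{Z}$, is divisible by $d$. If $c_{n/q}(\gamma)$ is equal-difference, then the element $\gamma+\frac{n}{\tau}$ also lies in the coset, so $d\mid \frac{n}{\tau}$; equivalently $\tau\mid n_{\gamma}$. Once this is in hand, dividing the congruence $\frac{n}{\tau}\mid\gamma(q-1)$ through by $d$ gives $\frac{n_{\gamma}}{\tau}\mid\widetilde{\gamma}(q-1)$, completing the equivalence. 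The trivial case $\tau=1$ should be handled separately at the outset.

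The main (minor) obstacle is exactly the divisibility $\tau\mid n_{\gamma}$ in the forward direction: the weaker $\tau\mid n$ alone is not sufficient, as the existence of cosets like $c_{6/5}(2)=\{2,4\}$ shows. The argument above resolves this by using the extra information provided by the equal-difference assumption, namely the presence of $\gamma+\frac{n}{\tau}$ among the coset elements combined with the divisibility by $d$ of each such element.
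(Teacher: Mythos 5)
Your proof is correct, and it takes a genuinely different route from the paper's. The paper argues structurally: it writes the elements of the primitive coset $c_{n_{\gamma}/q}(\widetilde{\gamma})$ as least nonnegative residues and observes that multiplying through by $m=\gcd(\gamma,n)$ gives exactly the least nonnegative residues of $c_{n/q}(\gamma)$, so the property of being a complete arithmetic sequence (the equivalent form of the definition established right after Definition~\ref{def 1}) transfers verbatim in both directions; no comparison of $\tau$ with $n_{\gamma}$ is ever needed. You instead reduce everything to the numerical criterion of Lemma~\ref{lem 4}, which is legitimate since that lemma precedes this one, and the only nontrivial point is exactly the one you isolate: the forward direction needs $\tau\mid n_{\gamma}$, which does not follow from $\tau\mid n$ alone (your example $c_{6/5}(2)$ makes this vivid). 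Your resolution --- every element $\gamma q^{j}$ is divisible by $d=\gcd(\gamma,n)$ because $\gcd(q,n)=1$, and the coset contains $\gamma+\frac{n}{\tau}$, forcing $d\mid\frac{n}{\tau}$ --- is sound, and the remaining divisibility manipulations (multiplying or cancelling the factor $d$ in $\frac{n}{\tau}\mid\gamma(q-1)$) are all valid. What each approach buys: the paper's scaling bijection is shorter and avoids the $\tau\mid n_{\gamma}$ subtlety entirely, while your argument makes the numerical content of the equivalence explicit and, as a byproduct, shows directly that the common difference of an equal-difference coset is always a multiple of $\gcd(\gamma,n)$.
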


\begin{proof}
	Denote by $m= \mathrm{gcd}(\gamma,n)$. Writing the elements of $c_{n_{\gamma}/q}(\widetilde{\gamma})$ as nonnegative integers less than $n_{\gamma}$:
	\begin{equation}\label{eq 2}
		c_{n_{\gamma}/q}(\widetilde{\gamma}) = \{\widetilde{\gamma}_{0}, \widetilde{\gamma}_{0}+d_{1},\cdots,\widetilde{\gamma}_{0}+d_{1}+\cdots+d_{\tau-1}\},
	\end{equation}
	where $\widetilde{\gamma}_{0}$ is the leader of $c_{n_{\gamma}/q}(\widetilde{\gamma})$, and $d_{1},\cdots,d_{\tau-1} \geq 1$, then $c_{n/q}(\gamma)$ can be expressed as 
	\begin{equation}\label{eq 5}
		c_{n/q}(\gamma)= \{\widetilde{\gamma}_{0} m, \widetilde{\gamma}_{0} m+d_{1} m,\cdots,\widetilde{\gamma}_{0} m+(d_{1}+\cdots+d_{\tau-1}) m\}.
	\end{equation}
	Notice that the elements on the RHS of \eqref{eq 5} are all nonnegative integers less than $n$, therefore they form an arithmetic sequence if and only if the elements in \eqref{eq 2} form an arithmetic sequence. Furthermore, if it is this case, say, $d_{1} = \cdots = d_{\tau-1} = d$, then $\widetilde{\gamma}_{0} \equiv \widetilde{\gamma}_{0} + \tau d \pmod{n_{\gamma}}$ is equivalent to 
	$$\widetilde{\gamma}_{0}m \equiv \widetilde{\gamma}_{0}m + \tau dm \pmod{n}.$$
\end{proof}

Now we give a more applicable criterion for a cyclotomic coset to be of equal difference, which does not involve the size of the coset.

\begin{theorem}\label{thm 1}
	Let the notations be defined as above. A cyclotomic coset $c_{n/q}(\gamma)$ is of equal difference if and only if the following two conditions are satisfied:
	\begin{description}
		\item[(\romannumeral1)] $\mathrm{rad}(n_{\gamma}) \mid q-1$;
		\item[(\romannumeral2)] $q \equiv 1 \pmod{4}$ if $8 \mid n_{\gamma}$.
	\end{description}
\end{theorem}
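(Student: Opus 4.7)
The plan is to reduce to the primitive case and then analyze the equal-difference condition one prime factor of $n_\gamma$ at a time. By Lemma \ref{lem 1}, we may assume $\gcd(\gamma,n)=1$ and set $n=n_\gamma$; since $\gamma$ is then a unit modulo $n$, the congruence $\gamma q\equiv\gamma\pmod{n/\tau}$ from Lemma \ref{lem 4} simplifies to $n/\tau\mid q-1$, with $\tau=\mathrm{ord}_n(q)$. So the target becomes the equivalence of ``$\tau\mid n$ and $n/\tau\mid q-1$'' with (i), (ii).

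For the forward direction, the key identification comes from reducing the set equality $\{\gamma q^j\}_{j}=\{\gamma+jn/\tau\}_{j}$ in $\mathbb{Z}/n\mathbb{Z}$ modulo $p^e$, where $p\mid n$, $e=v_p(n)$, and $k=v_p(\tau)$. Writing $n/\tau=p^{e-k}m$ with $\gcd(m,p)=1$, the right-hand multiset reduces to $\gamma\cdot U_{e-k}$ (with $U_j=1+p^j\mathbb{Z}/p^e\mathbb{Z}$) where each element appears $\tau/p^k$ times, while the left-hand multiset is $\gamma\cdot\langle q\bmod p^e\rangle$ with each distinct element appearing $\tau/\tau_p$ times (setting $\tau_p=\mathrm{ord}_{p^e}(q)$). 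Matching yields $\langle q\bmod p^e\rangle=U_{e-k}$ as sets and $\tau_p=p^k$. Since $\langle q\bmod p^e\rangle$ is a cyclic subgroup of $(\mathbb{Z}/p^e)^*$, so must $U_{e-k}$ be; this immediately rules out $e-k=0$ (as $U_0=\mathbb{Z}/p^e\mathbb{Z}$ is not contained in the units). For odd $p$, this yields $\tau_p=p^k$, a pure $p$-power, which forces $\mathrm{ord}_p(q)=1$ (since $p^k$ is coprime to $|(\mathbb{Z}/p)^*|=p-1$), giving (i). For $p=2$ with $e\geq 3$, the non-cyclicity of $U_1=(\mathbb{Z}/2^e)^*$ forces $e-k\geq 2$, so $q\in U_2$ and hence $q\equiv 1\pmod 4$, giving (ii).

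For the sufficiency direction, assume (i) and (ii). For each odd prime $p\mid n$, condition (i) together with Lemma \ref{lem 3} gives $\tau_p=p^{\max(0,\,e-v_p(q-1))}$, a power of $p$. For $p=2$: if $v_2(n)\leq 2$, direct computation yields $\tau_2\in\{1,2\}$; if $v_2(n)\geq 3$, then (ii) says $q\equiv 1\pmod 4$ and Lemma \ref{lem 2}(1) gives $\tau_2=2^{\max(0,\,e-v_2(q-1))}$. In all cases $\tau_p$ is a pure $p$-power, so the pairwise-coprime decomposition yields $\tau=\prod_p\tau_p\mid n$, and one checks $n/\tau=\prod_p p^{\min(e_p,\,v_p(q-1))}\mid q-1$, so the conditions of Lemma \ref{lem 4} are met.

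The main technical obstacle is the multiset identification $\langle q\bmod p^e\rangle=U_{e-v_p(\tau)}$ and the recognition that the non-cyclicity of $(\mathbb{Z}/2^e)^*$ for $e\geq 3$ is precisely what forces the extra hypothesis (ii); this explains why the threshold in (ii) is $8\mid n_\gamma$ rather than $4\mid n_\gamma$.
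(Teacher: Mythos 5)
Your proof is correct, and its necessity direction takes a genuinely different route from the paper's. The paper, after the same reduction to the primitive case via Lemma \ref{lem 1}, proves necessity by two ad hoc arguments: for (i) it picks a prime $\ell\mid n$ with $\ell\nmid q-1$, solves $k\cdot\frac{n}{\tau}\equiv-1\pmod{\ell}$, and derives the contradiction $\ell\mid q^{j}$ from membership of $\gamma+\gamma k\frac{n}{\tau}$ in the coset; for (ii) it runs a lift-the-exponent valuation computation to show $v_{2}(n/\tau)>v_{2}(q-1)$ when $8\mid n$ and $q\equiv3\pmod 4$. You instead reduce the set identity $\{\gamma q^{j}\}=\{\gamma+jn/\tau\}$ modulo each $p^{e}$ and match multiplicities to get $\langle q\bmod p^{e}\rangle=U_{e-k}$ with $\tau_{p}=p^{k}$; then $e-k\geq1$ follows because $\langle q\rangle$ consists of units, (i) follows because $\mathrm{ord}_{p}(q)$ divides both $p^{k}$ and $p-1$, and (ii) follows because $U_{1}=(\mathbb{Z}/2^{e}\mathbb{Z})^{\ast}$ is non-cyclic for $e\geq3$ while $\langle q\rangle$ is cyclic. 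This is a cleaner, more structural argument, and it genuinely explains why the threshold in (ii) is $8\mid n_{\gamma}$ rather than $4\mid n_{\gamma}$ --- something the paper's valuation computation verifies but does not illuminate. Your sufficiency direction (LTE prime by prime, then $\tau=\prod_{p}\tau_{p}\mid n$ and $n/\tau=\prod_{p}p^{\min(e_{p},v_{p}(q-1))}\mid q-1$, then Lemma \ref{lem 4}) is essentially the paper's argument, just organized by primes instead of by the three cases on $n$ and $q\bmod 4$. Two small points to make explicit if you write this up: for $e-k\geq1$ the set $U_{e-k}$ is a subgroup of $(\mathbb{Z}/p^{e}\mathbb{Z})^{\ast}$ of order $p^{k}$ (standard, but it is what licenses the cyclicity comparison), and for the prime $p=2$ in condition (i) one just notes $2\mid q-1$ since $q$ is odd.
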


\begin{proof}
	Applying Lemma \ref{lem 1}, without losing generality, we may assume that $\gamma$ is coprime to $n$ so that $n_{\gamma} = n$. First assume that the coset $c_{n/q}(\gamma)$, given by
	$$c_{n/q}(\gamma) = \{\gamma,\gamma q,\cdots,\gamma q^{\tau-1}\},$$
	satisfies (\romannumeral1) and (\romannumeral2). We treat the following cases separately.
	
	\textbf{Case $1$}: Let $n = p_{1}^{e_{1}}\cdots p_{s}^{e_{s}}$ be an odd integer, where $p_{1},\cdots,p_{s}$ are distinct odd primes coprime to $q$ and $e_{1},\cdots,e_{s}$ are positive integers. Since $\mathrm{rad}(n) = p_{1}\cdots p_{s} \mid q-1$, then
	$$d_{i} = v_{p_{i}}(q-1) > 0, \ i=1,\cdots,s.$$
	By the lift-the-exponent lemma we have
	$$\tau = p_{1}^{\mathrm{max}\{0,e_{1}-d_{1}\}}\cdots p_{s}^{\mathrm{max}\{0,e_{s}-d_{s}\}}.$$
	Clearly $\tau$ divides $n$, and 
	$$\dfrac{n}{\tau} = p_{1}^{\mathrm{min}\{e_{1},d_{1}\}}\cdots p_{s}^{\mathrm{min}\{e_{s},d_{s}\}} \mid q-1.$$
	It follows from Lemma \ref{lem 4} that $c_{n/q}(\gamma)$ is an equal-difference coset.
	
	\textbf{Case $2$}: Let $n = 2^{e_{0}}p_{1}^{e_{1}}\cdots p_{s}^{e_{s}}$ be an even integer, where $p_{1},\cdots,p_{s}$ are distinct odd primes and $e_{0},e_{1},\cdots,e_{s}$ are positive integers. Let $q$ be an odd prime power coprime to $n$, which satisfies that $q \equiv 1 \pmod{4}$. Denote by $d_{0} = v_{2}(q-1)$ and $d_{i} = v_{p_{i}}(q-1)$ for $i=1,\cdots,s$. Then $d_{0} \geq 2$ and $d_{1},\cdots,d_{s} \geq 1$. By the lift-the-exponent lemmas we have 
	$$\tau = 2^{\mathrm{max}\{0,e_{0}-d_{0}\}}p_{1}^{\mathrm{max}\{0,e_{1}-d_{1}\}}\cdots p_{s}^{\mathrm{max}\{0,e_{s}-d_{s}\}}.$$
	Clearly $\tau$ divides $n$, and 
	$$\dfrac{n}{\tau} = 2^{\mathrm{min}\{e_{0},d_{0}\}}p_{1}^{\mathrm{min}\{e_{1},d_{1}\}}\cdots p_{s}^{\mathrm{min}\{e_{s},d_{s}\}} \mid q-1.$$
	Hence in this case the coset $c_{n/q}(\gamma)$ is of equal difference.
	
	\textbf{Case $3$}: Let $n = 2^{e_{0}}p_{1}^{e_{1}}\cdots p_{s}^{e_{s}}$ be an even integer, where $p_{1},\cdots,p_{s}$ are pairwise distinct odd primes, $e_{0}$ is either $1$ or $2$, and $e_{1},\cdots,e_{s}$ are positive integers. Let $q$ be a prime power, which is coprime to $n$, such that $q \equiv 3 \pmod{4}$. Denote by $d_{i} = v_{p_{i}}(q-1)$ for $i = 1,\cdots,s$. Remembering that $v_{2}(q-1)=1$, then the lift-the-exponent lemmas implies that 
	\begin{equation*}
		\tau = \left\{
		\begin{array}{lcl}
			p_{1}^{\mathrm{max}\{0,e_{1}-d_{1}\}}\cdots p_{s}^{\mathrm{max}\{0,e_{s}-d_{s}\}}, \ \mathrm{if} \ e_{0}=1;\\
			2p_{1}^{\mathrm{max}\{0,e_{1}-d_{1}\}}\cdots p_{s}^{\mathrm{max}\{0,e_{s}-d_{s}\}}, \ \mathrm{if} \ e_{0}=2.
		\end{array} \right.
	\end{equation*}
    Thus one obtains that $\tau$ divides $n$, and for either $e_{0}=1$ or $e_{0}=2$,
    $$\dfrac{n}{\tau} = 2p_{1}^{\mathrm{min}\{e_{1},d_{1}\}}\cdots p_{s}^{\mathrm{min}\{e_{s},d_{s}\}} \mid q-1.$$
    Hence $c_{n/q}(\gamma)$ is an equal-difference cyclotomic coset.
    
    Conversely, suppose that $c_{n/q}(\gamma)$ is an equal-difference cyclotomic coset. If $\mathrm{rad}(n) \nmid q-1$, there exists a prime $\ell$ which divides $n$ but not $q-1$. In particular, $\ell$ and $\frac{n}{\tau}$ are coprime. Then there is an integer $k$ such that $k\cdot\frac{n}{\tau} \equiv -1 \pmod{\ell}$, or equivalently, $\ell \mid 1 + k\cdot \frac{n}{\tau}$. Since $\gamma + \gamma k \cdot \frac{n}{\tau}$ lies in $c_{n/q}(\gamma)$,
    $$\gamma + \gamma k \cdot \dfrac{n}{\tau} \equiv \gamma q^{j} \pmod{n}$$
    for some $0 \leq j \leq \tau-1$, which amounts to, by the assumption that $\mathrm{gcd}(\gamma,n)=1$, 
    $$1 + k \cdot \dfrac{n}{\tau} \equiv q^{j} \pmod{n}.$$
    As $\ell$ divides both $1+k\cdot\frac{n}{\tau}$ and $n$, $\ell$ divides $q^{j}$ and thus also $q$. It contradicts that $q^{\tau} \equiv 1 \pmod{n}$.
    
    Finally, suppose that $8 \mid n$ and $q \equiv 3 \pmod{4}$. Write $n = 2^{e_{0}}p_{1}^{e_{1}}\cdots p_{s}^{e_{s}}$, where $p_{1},\cdots,p_{s}$ are distinct odd primes, $e_{0} \geq 3$, and $e_{1},\cdots,e_{s}$ are positive integers. As being proved in the last paragraph, it holds that
    $$d_{i} = v_{p_{i}}(q-1) > 0, \ i=1,\cdots,s.$$
    Since $e_{0} \geq 3$, the size $\tau$ of $c_{n/q}(\gamma)$ must be even. Furthermore, by the lift-the-exponent lemma, $v_{2}(\tau)$ is the smallest positive even integer satisfying that
    $$ v_{2}(q+1) + v_{2}(\tau) = v_{2}(q^{\tau}-1) \geq e_{0}.$$
    Thus we have 
    \begin{equation*}
    	v_{2}(\tau) = \left\{
    	\begin{array}{lcl}
    		1, \quad \mathrm{if} \ e_{0}\leq v_{2}(q+1)+1;\\
    		e_{0}-v_{2}(q+1), \quad \mathrm{if} \ e_{0} > v_{2}(q+1)+1.
    	\end{array} \right.
    \end{equation*}
    which indicates that 
    $$v_{2}(\dfrac{n}{\tau}) = \mathrm{min}(e_{0}-1,v_{2}(q+1)) > 1 = v_{2}(q-1).$$
    As $\mathrm{gcd}(\gamma,n)=1$, one obtains $\frac{n}{\tau} \nmid \gamma(q-1)$. This is a contradiction. Here we complete the proof.
\end{proof}

Further, it also can be determined when all the $q$-cyclotomic cosets modulo $n$ are equal-difference cosets.

\begin{corollary}\label{coro 1}
	Let $q$ be a prime power and $n$ be a positive integer coprime to $q$. Then the $q$-cyclotomic cosets modulo $n$ are all equal-difference cosets if and only if the following two conditions hold:
	\begin{description}
		\item[(\romannumeral1)] $\mathrm{rad}(n) \mid q-1$;
		\item[(\romannumeral2)] $q \equiv 1 \pmod{4}$ if $8 \mid n$.
	\end{description}
\end{corollary}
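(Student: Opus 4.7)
The plan is to deduce Corollary \ref{coro 1} directly from Theorem \ref{thm 1}, reducing the statement about \emph{all} cosets to an equivalence between the uniform conditions on $n$ and the per-coset conditions on $n_{\gamma} = n/\mathrm{gcd}(\gamma,n)$ as $\gamma$ ranges over $\mathbb{Z}/n\mathbb{Z}$.

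For the ``if'' direction, I would fix an arbitrary $\gamma \in \mathbb{Z}/n\mathbb{Z}$ and note that $n_{\gamma}$ is a divisor of $n$. Hence $\mathrm{rad}(n_{\gamma}) \mid \mathrm{rad}(n) \mid q-1$, verifying condition (i) of Theorem \ref{thm 1}; and if $8 \mid n_{\gamma}$, then a fortiori $8 \mid n$, so the hypothesis $q \equiv 1 \pmod{4}$ passes through, verifying condition (ii). Theorem \ref{thm 1} then shows $c_{n/q}(\gamma)$ is of equal difference, and since $\gamma$ was arbitrary, every $q$-cyclotomic coset modulo $n$ is of equal difference.

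For the ``only if'' direction, I would specialize to the primitive coset $c_{n/q}(1)$, for which $\gcd(1,n) = 1$ and therefore $n_{1} = n$. Since by assumption $c_{n/q}(1)$ is of equal difference, Theorem \ref{thm 1} forces $\mathrm{rad}(n) \mid q-1$ and, whenever $8 \mid n$, also $q \equiv 1 \pmod{4}$. This recovers (i) and (ii) verbatim.

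There is no real obstacle here: the corollary is essentially a packaging of Theorem \ref{thm 1}, the point being that the per-coset hypothesis in the theorem depends only on $n_{\gamma}$, which is maximized (and equal to $n$) exactly when $\gamma$ is a unit modulo $n$. The only thing worth writing explicitly is the monotonicity $n_{\gamma} \mid n \Rightarrow \mathrm{rad}(n_{\gamma}) \mid \mathrm{rad}(n)$ and the implication $8 \mid n_{\gamma} \Rightarrow 8 \mid n$, both of which are immediate from divisibility of integers.
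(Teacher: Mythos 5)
Your proposal is correct and follows essentially the same route as the paper: both directions reduce to Theorem \ref{thm 1}, using $n_{\gamma}\mid n$ for the ``if'' direction and a primitive coset (you take $\gamma=1$; the paper takes an arbitrary primitive coset) with $n_{\gamma}=n$ for the ``only if'' direction.
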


\begin{proof}
    Assume that $q$ and $n$ meet condition (\romannumeral1) and (\romannumeral2). Let $c_{n/q}(\gamma)$ be any $q$-cyclotomic coset modulo $n$, and let $n_{\gamma} = \frac{n}{\mathrm{gcd}(\gamma,n)}$. Certainly $n_{\gamma}$ is a divisor of $n$, therefore it holds that $\mathrm{rad}(n_{\gamma}) \mid q-1$, and $q \equiv 1 \pmod{4}$ if $8 \mid n_{\gamma}$. By Theorem \ref{thm 1} the coset $c_{n/q}(\gamma)$ is of equal difference.
    
    Conversely, suppose that all the $q$-cyclotomic coset modulo $n$ are equal-difference cosets. Choose a primitive $q$-cyclotomic coset $c_{n/q}(\gamma)$ modulo $n$, then we have
    $$n_{\gamma} = \dfrac{n}{\mathrm{gcd}(\gamma,n)} = n.$$
    Now the conclusion again follows from Theorem \ref{thm 1}.
\end{proof}

\begin{remark}
	There is a tedious but more straightforward proof of Theorem \ref{thm 1} and Corollary \ref{coro 1}, which follows from 
the results on representatives and sizes of cyclotomic cosets given in \cite{Zhu} (Theorem 3.1., Proposition 3.1., Theorem 3.4. 
and Corollary 3.3. in \cite{Zhu}). In addition, this proof offers an explanation for the phenomenon that given $\mathrm{rad}(n) 
\mid q-1$, non-equal-difference $q$-cyclotomic coset modulo $n$ appear when and only when $q \equiv 3 \pmod{4}$ 
and $v_{2}(n) \geq 3$. In fact, there exist sequences in the $2$-adic $q$-cyclotomic system with non-equal-difference 
components if and only if $q \equiv 3 \pmod{4}$. And for any such sequence, the minimal degree where the component is 
not equal difference is exactly the stable degree plus $1$. Now the conclusion follows from that the stable degree of 
any sequence is not less than $2$.
\end{remark}

From the proof of Corollary \ref{coro 1} we also deduce the following consequence.

\begin{corollary}
	Let $q$ be a prime power and $n$ be a positive integer coprime to $q$. Then all the $q$-cyclotomic cosets modulo $n$ are equal-difference cosets if and only if a primitive one is.
\end{corollary}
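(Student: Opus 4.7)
The plan is to reduce the statement directly to Theorem \ref{thm 1} and Corollary \ref{coro 1}, whose characterizations happen to involve exactly the same two arithmetic conditions on $q$ and $n$. Specifically, Theorem \ref{thm 1} characterizes the equal-difference property of a single coset $c_{n/q}(\gamma)$ via conditions on $n_{\gamma}$, while Corollary \ref{coro 1} characterizes the global equal-difference property via the same conditions but on $n$. The bridge is the observation that for a primitive coset, $\gcd(\gamma,n)=1$, hence $n_{\gamma}=n$, so the two characterizations literally coincide.

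The forward implication is trivial: if every $q$-cyclotomic coset modulo $n$ is equal-difference, then in particular the primitive one $c_{n/q}(1)$ is. For the reverse implication, I would fix any primitive coset $c_{n/q}(\gamma)$ modulo $n$ (such cosets exist, e.g. $c_{n/q}(1)$). Because $\gcd(\gamma,n)=1$, we have $n_{\gamma}=n$, so Theorem \ref{thm 1} tells us that the equal-difference property of $c_{n/q}(\gamma)$ is equivalent to
\begin{enumerate}[label=(\roman*)]
\item $\mathrm{rad}(n)\mid q-1$, and
\item $q\equiv 1\pmod 4$ whenever $8\mid n$.
\end{enumerate}
But these are exactly the two conditions of Corollary \ref{coro 1}, so I may invoke Corollary \ref{coro 1} to conclude that every $q$-cyclotomic coset modulo $n$ is equal-difference.

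The main (and only) obstacle is conceptual rather than technical: one must notice that Theorem \ref{thm 1}, when applied to a primitive coset, gives conditions depending solely on $n$ (not on the chosen representative $\gamma$), and that these are precisely the conditions appearing in Corollary \ref{coro 1}. Once this is observed, no further computation is needed; the corollary is really just a repackaging of the equivalence of the two earlier characterizations through the case $n_{\gamma}=n$.
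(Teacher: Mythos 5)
Your proof is correct and follows essentially the same route as the paper, which derives this corollary directly from the proof of Corollary \ref{coro 1}: a primitive coset has $n_{\gamma}=n$, so Theorem \ref{thm 1} applied to it yields exactly the two conditions of Corollary \ref{coro 1}, which in turn give the equal-difference property for all cosets. Nothing is missing.
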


\section{The multiple equal-difference structure of cyclotomic cosets}\label{sec 3}
In this section, we consider the general case. In fact, although a cyclotomic coset $c_{n/q}(\gamma)$ is not necessarily of equal difference, it turns out that $c_{n/q}(\gamma)$ can always be expressed as a disjoint union of equal-difference subsets. A partition of $c_{n/q}(\gamma)$ into disjoint equal-difference subsets is called an equal-difference decomposition of $c_{n/q}(\gamma)$. If, moreover, the partition is in the form
$$c_{n/q}(\gamma) = \bigsqcup_{j=0}^{\mathrm{gcd}(t,\tau)-1}c_{n/q^{t}}(\gamma q^{j}),$$
where $t \in \mathbb{N}^{+}$, then it is called a multiple equal-difference representation of $c_{n/q}(\gamma)$. The class of multiple equal-difference representations is particularly of interest to us. In this section, we determine all the equal-difference decompositions of any cyclotomic coset, and give an explicit characterization to the class of multiple equal-difference representations.

To make the statements precise, we first need some preparations. Let $t$ be a positive integer. For any $\gamma \in \mathbb{Z}/n\mathbb{Z}$, the $q^{t}$-cyclotomic coset $c_{n/q^{t}}(\gamma)$ is automatically a subset of the $q$-cyclotomic coset $c_{n/q}(\gamma)$. Denote by $t^{\prime}= \mathrm{gcd}(t,\tau)$, where $\tau = |c_{n/q}(\gamma)|$. Then there are integers $u$ and $v$ such that $ut + v\tau = t^{\prime}$, and thus
$$\gamma q^{t^{\prime}} \equiv \gamma q^{ut} \pmod{n},$$
which implies that $c_{n/q^{t^{\prime}}}(\gamma) \subseteq c_{n/q^{t}}(\gamma)$. The inverse conclusion is trivial, hence $c_{n/q^{t}}(\gamma) = c_{n/q^{t^{\prime}}}(\gamma)$. It follows immediately that the $q$-cyclotomic coset $c_{n/q}(\gamma)$ can be written as
\begin{equation}\label{eq 7}
	c_{n/q}(\gamma) = \bigsqcup_{j=0}^{t^{\prime}-1}c_{n/q^{t^{\prime}}}(\gamma q^{j}) = \bigsqcup_{j=0}^{t^{\prime}-1}c_{n/q^{t}}(\gamma q^{j}).
\end{equation}
The identity \eqref{eq 7} is called the $q^{t}$-cyclotomic decomposition of $c_{n/q}(\gamma)$.

The definition of equal-difference cyclotomic coset can be generalized naturally to any subset of a coset. Let $E$ be a subset of $c_{n/q}(\gamma)$, with $| E|=\tau_{E}$. Then $E$ is called an equal-difference subset of $c_{n/q}(\gamma)$, if $\tau_{E} \mid n$ and $E$ has the form
$$\{\gamma,\gamma+\dfrac{n}{\tau_{E}},\cdots,\gamma+(\tau_{E}-1)\dfrac{n}{\tau_{E}}\}.$$
The quotient $\frac{n}{\tau_{E}}$ is called the common difference of $E$. 

We define an order on the set of equal-difference decompositions of a given coset. Let
$$c_{n/q}(\gamma) = \bigsqcup_{i \in I}E_{i} = \bigsqcup_{j \in J}E_{j}^{\prime}$$
be two equal-difference decompositions of $c_{n/q}(\gamma)$. We say that $\bigsqcup\limits_{i \in I}E_{i}$ is coarser than $\bigsqcup\limits_{j \in J}E_{j}^{\prime}$ and denote by $\bigsqcup\limits_{i \in I}E_{i} \geq \bigsqcup\limits_{j \in J}E_{j}^{\prime}$, or equivalently, $\bigsqcup\limits_{j \in J}E_{j}^{\prime}$ is finer than $\bigsqcup\limits_{i \in I}E_{i}$ and denote by $\bigsqcup\limits_{j \in J}E_{j}^{\prime} \leq \bigsqcup\limits_{i \in I}E_{i}$, if the index set $J$ can be partitioned as $J = \bigsqcup\limits_{i \in I}J_{i}$ and 
$$E_{i} = \bigsqcup_{j \in J_{i}}E_{j}^{\prime}, \ \forall i \in I.$$

Now we give a closer description of equal-difference decompositions of a cyclotomic coset. Let $c_{n/q}(\gamma)$ be a $q$-cyclotomic coset modulo $n$ with $|c_{n/q}(\gamma)| = \tau$. The first example of an equal-difference decomposition of $c_{n/q}(\gamma)$ is trivial to see, as any one-element subset is of equal difference so that $c_{n/q}(\gamma)$ can be expressed as
\begin{equation}\label{eq 3}
	c_{n/q}(\gamma) = \bigsqcup_{j=0}^{\tau-1} \{\gamma q^{j}\}.
\end{equation}
Clearly the decomposition \eqref{eq 3} is the unique finest equal-difference decomposition of $c_{n/q}(\gamma)$. 

\begin{lemma}\label{lem 5}
	Let $c_{n/q}(\gamma)$ be a $q$-cyclotomic coset modulo $n$, and $E$ be an equal-difference subset of $c_{n/q}(\gamma)$ with $|E|=\tau_{E}$. Then $E$ is a $q^{t}$-cyclotomic coset modulo $n$, where $t$ is the smallest positive integer such that 
	$$\gamma q^{t} \equiv \gamma \pmod{\dfrac{n}{\tau_{E}}}.$$
\end{lemma}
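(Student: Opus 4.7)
The plan is to identify both $E$ and $c_{n/q^{t}}(\gamma)$ with the same explicit set, namely
\[ S := \{x \in c_{n/q}(\gamma) : x \equiv \gamma \pmod{n/\tau_{E}}\}. \]
Establishing $E = S$ and $c_{n/q^{t}}(\gamma) = S$ separately then gives the conclusion.

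For $E = S$, I would first observe that by the definition of an equal-difference subset every element of $E$ lies in the residue class of $\gamma$ modulo $n/\tau_{E}$. Since $n/\tau_{E} \mid n$, this residue class contains exactly $\tau_{E} = |E|$ elements of $\mathbb{Z}/n\mathbb{Z}$, so $E$ already exhausts it. Intersecting with $c_{n/q}(\gamma)$ and using the hypothesis $E \subseteq c_{n/q}(\gamma)$ then yields $E = S$.

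For $c_{n/q^{t}}(\gamma) = S$, the inclusion $\subseteq$ is a routine induction starting from $\gamma q^{t} \equiv \gamma \pmod{n/\tau_{E}}$, showing $\gamma q^{tj} \equiv \gamma \pmod{n/\tau_{E}}$ for every $j$. For the reverse inclusion, I take an arbitrary $\gamma q^{s} \in S$ and apply Euclidean division to write $s = tj + r$ with $0 \leq r < t$; the induction gives $\gamma q^{s} \equiv \gamma q^{r} \pmod{n/\tau_{E}}$, so $\gamma q^{r} \equiv \gamma \pmod{n/\tau_{E}}$, which contradicts the minimality of $t$ unless $r = 0$, in which case $\gamma q^{s} \in c_{n/q^{t}}(\gamma)$. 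The only step requiring real care is this invocation of the minimality of $t$; existence of a valid $t$ is not an issue because $t = |c_{n/q}(\gamma)|$ trivially satisfies the congruence. Assembling $E = S = c_{n/q^{t}}(\gamma)$ closes the proof.
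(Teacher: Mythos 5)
Your proof is correct and takes essentially the same route as the paper's: both rest on the observation that $E$ is the entire residue class of $\gamma$ modulo $n/\tau_{E}$ inside $\mathbb{Z}/n\mathbb{Z}$, use induction on powers of $q^{t}$ for the inclusion $c_{n/q^{t}}(\gamma)\subseteq E$, and invoke the minimality of $t$ (your Euclidean-division step is exactly the justification behind the paper's claim that $t\mid j_{k}$) for the reverse inclusion. Your intermediate set $S$ is only an organizational device, and like the paper you tacitly normalize so that $\gamma\in E$.
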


\begin{proof}
	Without losing generality, we may suppose that $\gamma$ is contained in $E$. Then $E$ can be written as 
	$$\{\gamma,\gamma+\dfrac{n}{\tau_{E}},\cdots,\gamma+(\tau_{E}-1)\dfrac{n}{\tau_{E}}\}.$$
	Since $E \subseteq c_{n/q}(\gamma)$, for any $1 \leq k \leq \tau_{E}-1$, there is an integer $1 \leq j_{k} \leq \tau-1$ such that 
	$$\gamma + k\cdot \dfrac{n}{\tau_{E}} \equiv \gamma q^{j_{k}} \pmod{n},$$
	which implies that $\gamma q^{j_{k}} \equiv \gamma \pmod{\frac{n}{\tau_{E}}}$. By the definition of $t$ we have $t \mid j_{k}$ and thus $\gamma q^{j_{k}} \in c_{n/q^{t}}(\gamma)$. Consequently, $E$ is contained in $c_{n/q^{t}}(\gamma)$.
	 
	Conversely, as $\frac{n}{\tau_{E}} \mid \gamma q^{t} -\gamma$, by induction one obtains that 
	$$\dfrac{n}{\tau_{E}} \mid \gamma q^{t\cdot j}-\gamma, \ \forall j \in \mathbb{N}.$$
	It follows immediately that $c_{n/q^{t}}(\gamma) \subseteq E$. In conclusion, we have $E = c_{n/q^{t}}(\gamma)$.
\end{proof}

On the other hand, according to Theorem \ref{thm 1}, we obtain the opposite direction of Lemma \ref{lem 5}.

\begin{lemma}\label{lem 6}
	Let $c_{n/q}(\gamma)$ be a $q$-cyclotomic coset modulo $n$, with $n_{\gamma} = \frac{n}{\mathrm{gcd}(\gamma,n)}$. Then any $q^{t}$-cyclotomic coset modulo $n$ contained in $c_{n/q}(\gamma)$ is of equal difference if and only if
	\begin{description}
		\item[(\romannumeral1)] $\mathrm{rad}(n_{\gamma}) \mid q^{t}-1$;
		\item[(\romannumeral2)] $q^{t} \equiv 1 \pmod{4}$ if $8 \mid n_{\gamma}$.
	\end{description}
	In this case, the $q^{t}$-cyclotomic decomposition
	\begin{equation}\label{eq 6}
		c_{n/q}(\gamma) = \bigsqcup_{j=0}^{t^{\prime}-1}c_{n/q^{t}}(\gamma q^{j}),
	\end{equation}
	where $t^{\prime} = \mathrm{gcd}(t,\tau)$, is an equal-difference decomposition of $c_{n/q}(\gamma)$.
\end{lemma}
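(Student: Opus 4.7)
The plan is to reduce the statement to Theorem~\ref{thm 1} applied term-by-term to the $q^{t}$-cyclotomic decomposition \eqref{eq 7} of $c_{n/q}(\gamma)$. The key preliminary observation is that, because $\gcd(q,n)=1$, multiplication by $q^{j}$ is a unit modulo $n$, and therefore $\gcd(\gamma q^{j},n) = \gcd(\gamma,n)$ for every $j$. Consequently $n_{\gamma q^{j}} = n_{\gamma}$ for all $j=0,1,\ldots,t^{\prime}-1$; that is, every $q^{t}$-cyclotomic coset modulo $n$ lying inside $c_{n/q}(\gamma)$ has the same primitive denominator as $c_{n/q}(\gamma)$ itself. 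This uniformity is what lets a single pair of conditions (i)--(ii) cover all pieces at once.

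For the backward direction, I would assume (i) and (ii) hold. Since $q^{t}$ is again a prime power coprime to $n$, Theorem~\ref{thm 1} applies with $q^{t}$ in place of $q$ to each $c_{n/q^{t}}(\gamma q^{j})$; its hypotheses, read with the common primitive denominator $n_{\gamma}$, reduce exactly to $\mathrm{rad}(n_{\gamma}) \mid q^{t}-1$ and (when $8\mid n_{\gamma}$) $q^{t}\equiv 1\pmod 4$, so every $c_{n/q^{t}}(\gamma q^{j})$ is of equal difference. By \eqref{eq 7}, every $q^{t}$-cyclotomic coset modulo $n$ contained in $c_{n/q}(\gamma)$ is one of the $c_{n/q^{t}}(\gamma q^{j})$, which proves this direction.

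For the forward direction, it is enough to test a single $q^{t}$-cyclotomic coset in $c_{n/q}(\gamma)$; I would take $c_{n/q^{t}}(\gamma)$ itself. Its primitive denominator is $n_{\gamma}$, so applying Theorem~\ref{thm 1} in the reverse direction to the pair $(q^{t},n)$ with representative $\gamma$ forces (i) and (ii).

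Finally, for the ``in this case'' clause, once each term of \eqref{eq 7} has been shown to be equal-difference, the identity \eqref{eq 7} itself is by definition a partition of $c_{n/q}(\gamma)$ into equal-difference subsets, so nothing further needs to be done. I do not expect a real obstacle here; the only delicate point to state cleanly is the invariance $n_{\gamma q^{j}} = n_{\gamma}$ under the $q$-action on representatives, which is precisely what makes Theorem~\ref{thm 1} produce a uniform criterion across the decomposition rather than one condition per piece.
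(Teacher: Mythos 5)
Your proposal is correct and follows essentially the same route as the paper: both rest on the invariance $\gcd(\gamma q^{j},n)=\gcd(\gamma,n)$, hence $n_{\gamma q^{j}}=n_{\gamma}$, and then invoke Theorem~\ref{thm 1} with $q^{t}$ in place of $q$ on each component of the $q^{t}$-cyclotomic decomposition. The paper states this more tersely, but the content is identical.
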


\begin{proof}
	Notice that for any element $\gamma q^{j} \in c_{n/q}(\gamma)$, as $q$ is coprime to $n$, $\mathrm{gcd}(\gamma q^{j},n) = \mathrm{gcd}(\gamma,n)$, and thus
	$$n_{\gamma q^{j}} = \dfrac{n}{\mathrm{gcd}(\gamma q^{j},n)} = \dfrac{n}{\mathrm{gcd}(\gamma,n)} = n_{\gamma}.$$
	Now Theorem \ref{thm 1} indicates the first assertion. The second assertion is a direct consequence.
\end{proof}

\begin{remark}\label{re 1}
	If a positive integer $t$ satisfies the conditions of Lemma \ref{lem 6}, then $t^{\prime} = \mathrm{gcd}(t,\tau)$ is the smallest positive integer such that 
	$$\gamma q^{j}\cdot q^{t^{\prime}} \equiv \gamma q^{j} \pmod{\dfrac{n}{|c_{n/q^{t}}(\gamma q^{j})|}}, \ \forall 0 \leq j \leq t^{\prime}-1.$$
	Applying Lemma \ref{lem 5} recovers the identity
	$$c_{n/q}(\gamma) = \bigsqcup_{j=0}^{t^{\prime}-1}c_{n/q^{t}}(\gamma q^{j}) = \bigsqcup_{j=0}^{t^{\prime}-1}c_{n/q^{t^{\prime}}}(\gamma q^{j}).$$
\end{remark}

\begin{proposition}\label{prop 1}
	Let
	\begin{equation*}
		\omega_{\gamma} = \left\{
		\begin{array}{lcl}
			2\mathrm{ord}_{\mathrm{rad}(n_{\gamma})}(q), \quad \mathrm{if} \ q^{\mathrm{ord}_{\mathrm{rad}(n_{\gamma})}(q)} \equiv 3 \pmod{4} \ \mathrm{and} \ 8 \mid n_{\gamma};\\
			\mathrm{ord}_{\mathrm{rad}(n_{\gamma})}(q), \quad \mathrm{otherwise}.
		\end{array} \right.
	\end{equation*}
	Then 
	\begin{equation}\label{eq 4}
		c_{n/q}(\gamma) = \bigsqcup_{j=0}^{\omega_{\gamma}-1}c_{n/q^{\omega_{\gamma}}}(\gamma q^{j})
	\end{equation}
	is the unique coarsest equal-difference decomposition of $c_{n/q}(\gamma)$.
\end{proposition}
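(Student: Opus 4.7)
The plan is to deduce the proposition from the previous lemmas in two main steps: (a) verify that the displayed $q^{\omega_{\gamma}}$-cyclotomic decomposition \eqref{eq 4} is an equal-difference decomposition, by identifying $\omega_{\gamma}$ as the smallest positive integer $t$ making the $q^{t}$-cyclotomic decomposition of $c_{n/q}(\gamma)$ equal-difference; (b) show every equal-difference subset of $c_{n/q}(\gamma)$ lies inside a single block of \eqref{eq 4}. Uniqueness of a coarsest element in the poset of equal-difference decompositions of $c_{n/q}(\gamma)$ is automatic, so these two facts suffice.

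For (a), I will invoke Lemma \ref{lem 6} with $t = \omega_{\gamma}$. Condition (\romannumeral1) there holds because $\omega_{\gamma}$ is a positive multiple of $\mathrm{ord}_{\mathrm{rad}(n_{\gamma})}(q)$ by definition. Condition (\romannumeral2) demands $q^{\omega_{\gamma}} \equiv 1 \pmod{4}$ whenever $8 \mid n_{\gamma}$: this is immediate when $q^{\mathrm{ord}_{\mathrm{rad}(n_{\gamma})}(q)} \equiv 1 \pmod{4}$, while in the case $q^{\mathrm{ord}_{\mathrm{rad}(n_{\gamma})}(q)} \equiv 3 \pmod{4}$ the doubling built into the definition gives $q^{\omega_{\gamma}} = \bigl(q^{\mathrm{ord}_{\mathrm{rad}(n_{\gamma})}(q)}\bigr)^{2} \equiv 1 \pmod{4}$. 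A parallel case check shows further that $\omega_{\gamma}$ is exactly the smallest $t \in \mathbb{N}^{+}$ satisfying both conditions of Lemma \ref{lem 6}; indeed, both conditions are divisibility-type requirements on $t$, so the set of valid $t$ coincides with the set of positive multiples of $\omega_{\gamma}$.

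For (b), let $E$ be an arbitrary equal-difference subset of $c_{n/q}(\gamma)$, and fix $\delta \in E$. By Lemma \ref{lem 5}, $E = c_{n/q^{t_{E}}}(\delta)$ for a unique $t_{E} \in \mathbb{N}^{+}$. Viewing $E$ as a $q^{t_{E}}$-cyclotomic coset modulo $n$ that is equal-difference, Theorem \ref{thm 1} applied with $q$ replaced by $q^{t_{E}}$ and $\gamma$ replaced by $\delta$ forces $\mathrm{rad}(n_{\delta}) \mid q^{t_{E}} - 1$ together with $q^{t_{E}} \equiv 1 \pmod{4}$ whenever $8 \mid n_{\delta}$. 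Since $\mathrm{gcd}(q,n) = 1$, one has $\mathrm{gcd}(\delta,n) = \mathrm{gcd}(\gamma,n)$ and hence $n_{\delta} = n_{\gamma}$, so these are precisely the conditions characterizing the positive multiples of $\omega_{\gamma}$ obtained in step (a); therefore $\omega_{\gamma} \mid t_{E}$. Consequently $c_{n/q^{t_{E}}}(\delta) \subseteq c_{n/q^{\omega_{\gamma}}}(\delta)$, and the right-hand side is one of the blocks $c_{n/q^{\omega_{\gamma}}}(\gamma q^{j})$ of \eqref{eq 4}. Thus every equal-difference subset $E$ fits inside a single block, which yields the required coarseness.

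The main technical point is the minimality claim for $\omega_{\gamma}$: the interaction of the parity of $\mathrm{ord}_{\mathrm{rad}(n_{\gamma})}(q)$, the residue $q \pmod{4}$, and the condition $8 \mid n_{\gamma}$ must be tracked carefully to confirm that the set of valid $t$'s is exactly the set of positive multiples of $\omega_{\gamma}$. Once that is in hand, the coarseness statement follows uniformly from Lemma \ref{lem 5} and Theorem \ref{thm 1}, and uniqueness is a formal consequence of the definition of a greatest element.
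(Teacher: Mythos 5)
Your proposal is correct and follows essentially the same route as the paper: verify via Lemma \ref{lem 6} that the $q^{\omega_{\gamma}}$-cyclotomic decomposition is equal-difference, then use Lemma \ref{lem 5} together with the only-if direction of Theorem \ref{thm 1} (equivalently Lemma \ref{lem 6}) to show that every equal-difference subset is a $q^{t}$-cyclotomic coset with $\omega_{\gamma}\mid t$, hence sits inside a single block of \eqref{eq 4}. The only point treated slightly more lightly than in the paper is the observation that $\omega_{\gamma}\mid\tau$ (needed so that \eqref{eq 4} really has $\omega_{\gamma}$ blocks), but this follows from $\mathrm{ord}_{\mathrm{rad}(n_{\gamma})}(q)\mid\tau$ and the parity analysis you already sketch.
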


\begin{proof}
	It is clear form the construction of $\omega_{\gamma}$ that $\mathrm{rad}(n_{\gamma}) \mid q^{\omega_{\gamma}}-1$ and $q^{\omega_{\gamma}} \equiv 1 \pmod{4}$ if $8 \mid n_{\gamma}$. As $\omega_{\gamma}$ divides $\tau$, by Lemma \ref{lem 6} the disjoint-union
	$$c_{n/q}(\gamma) = \bigsqcup_{j=0}^{\omega_{\gamma}-1}c_{n/q^{\omega_{\gamma}}}(\gamma q^{j})$$
	is an equal-difference decomposition of $c_{n/q}(\gamma)$.
	
	Now it remains to show that \eqref{eq 4} is the unique coarsest decomposition. Let
	$$c_{n/q}(\gamma) = \bigsqcup_{i \in I}E_{i}$$
	be any equal-difference decomposition of $c_{n/q}(\gamma)$. By Lemma \ref{lem 5} each $E_{i}$ is a $q^{t_{i}}$-cyclotomic coset modulo $n$, say, $E_{i} = c_{n/q^{t_{i}}}(\gamma_{i})$. And Lemma \ref{lem 6} implies that for any $i \in I$,
	\begin{description}
		\item[(\romannumeral1)] $\mathrm{rad}(n_{\gamma_{i}}) \mid q^{t_{i}}-1$;
		\item[(\romannumeral2)] $q^{t_{i}} \equiv 1 \pmod{4}$ if $8 \mid n_{\gamma_{i}}$.
	\end{description}
	Since all the $\gamma_{i}$'s lie in $c_{n/q}(\gamma)$, $n_{\gamma_{i}} = n_{\gamma}$ and therefore the $t_{i}$'s are multiples of $\omega_{\gamma}$. That is, $c_{n/q^{t_{i}}}(\gamma_{i}) \subseteq c_{n/q^{\omega_{\gamma}}}(\gamma_{i})$. And as $c_{n/q^{\omega_{\gamma}}}(\gamma q^{j})$, $j = 0,1,\cdots,\omega_{\gamma}-1$, partition the whole coset $c_{n/q}(\gamma)$, then $c_{n/q^{\omega_{\gamma}}}(\gamma_{i})$ coincides with some $c_{n/q^{\omega_{\gamma}}}(\gamma q^{j_{i}})$, $0 \leq j_{i} \leq \omega_{\gamma}-1$. Define the subset $I_{j}$ of $I$ by
	$$I_{j} = \{i \in I | \ \gamma_{i}\in c_{n/q^{\omega_{\gamma}}}(\gamma q^{j})\}.$$
	It is obvious that $I = \bigsqcup\limits_{j=0}^{\omega-1}I_{j}$. And since $c_{n/q}(\gamma) = \bigsqcup\limits_{i \in I}E_{i}$ is a disjoint union, one obtains that for any $0 \leq j \leq \omega_{\gamma}-1$,
	$$c_{n/q^{\omega_{\gamma}}}(\gamma q^{j}) = \bigsqcup_{j \in J_{i}}E_{i}.$$
	Here we complete the proof.
\end{proof}

\begin{corollary}
	Let $E$ be an equal-difference subset of $c_{n/q}(\gamma)$. Then $E$ must be contained in one of the subset $c_{n/q^{\omega}}(\gamma q^{j})$, $j=0,1,\cdots,\omega-1$. Equivalently, the subsets $c_{n/q^{\omega}}(\gamma q^{j})$ give rise to the longest arithmetic sequences in $c_{n/q}(\gamma)$.
\end{corollary}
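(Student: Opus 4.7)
The plan is to show the corollary by reducing it to the mechanism already used in the proof of Proposition \ref{prop 1}, applied now to a single equal-difference subset $E$ rather than to a full decomposition. First I would invoke Lemma \ref{lem 5} to observe that $E$ itself is a $q^{t}$-cyclotomic coset modulo $n$ for some positive integer $t$; concretely, $E = c_{n/q^{t}}(\gamma')$ for any $\gamma' \in E$. Viewing $E$ as an equal-difference $q^{t}$-cyclotomic coset, Theorem \ref{thm 1} applied with the prime power $q$ replaced by $q^{t}$ then forces $\mathrm{rad}(n_{\gamma'}) \mid q^{t}-1$, and moreover $q^{t} \equiv 1 \pmod{4}$ whenever $8 \mid n_{\gamma'}$. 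Because $\gamma' \in c_{n/q}(\gamma)$ and $\mathrm{gcd}(q,n)=1$, one has $n_{\gamma'} = n_{\gamma}$.

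Next I would verify that $\omega_{\gamma}$, as defined in Proposition \ref{prop 1}, is precisely the smallest positive integer satisfying the two conditions above with $n_{\gamma}$ in place of $n_{\gamma'}$. The condition $\mathrm{rad}(n_{\gamma}) \mid q^{t}-1$ is equivalent to $\mathrm{ord}_{\mathrm{rad}(n_{\gamma})}(q) \mid t$; when $8 \mid n_{\gamma}$, the condition $q^{t} \equiv 1 \pmod{4}$ additionally forces $t$ to be even whenever $q^{\mathrm{ord}_{\mathrm{rad}(n_{\gamma})}(q)} \equiv 3 \pmod{4}$. A brief case analysis, relying on the elementary fact that any odd square is $\equiv 1 \pmod{4}$, shows that the smallest admissible $t$ is exactly $\omega_{\gamma}$. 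Consequently $\omega_{\gamma} \mid t$, so that $E = c_{n/q^{t}}(\gamma') \subseteq c_{n/q^{\omega_{\gamma}}}(\gamma')$. Since \eqref{eq 4} partitions $c_{n/q}(\gamma)$, the element $\gamma'$ lies in exactly one component $c_{n/q^{\omega_{\gamma}}}(\gamma q^{j_{0}})$, and this component coincides with $c_{n/q^{\omega_{\gamma}}}(\gamma')$, yielding $E \subseteq c_{n/q^{\omega_{\gamma}}}(\gamma q^{j_{0}})$.

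For the equivalent formulation, I would simply observe that any arithmetic sequence contained in $c_{n/q}(\gamma)$ satisfying the completeness condition \eqref{eq 1} is by definition an equal-difference subset, hence by the first part is contained in one of the $c_{n/q^{\omega_{\gamma}}}(\gamma q^{j})$, which are themselves complete arithmetic sequences of length $|c_{n/q^{\omega_{\gamma}}}(\gamma q^{j})|$. I do not anticipate a serious obstacle here: once the reduction to Theorem \ref{thm 1} and the minimality characterization of $\omega_{\gamma}$ are in place, the corollary follows formally from Proposition \ref{prop 1}. The only slightly delicate point is justifying that $\omega_{\gamma}$ is genuinely the minimum, but this is exactly the small case analysis implicit in the very definition of $\omega_{\gamma}$.
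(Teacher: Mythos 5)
Your proposal is correct and follows essentially the same route as the paper, whose proof of this corollary is simply to cite the argument inside Proposition \ref{prop 1}: there, too, Lemma \ref{lem 5} identifies $E$ as a $q^{t}$-cyclotomic coset, the equal-difference conditions force $\omega_{\gamma}\mid t$, and the partition \eqref{eq 4} then locates $E$ inside a single component $c_{n/q^{\omega_{\gamma}}}(\gamma q^{j})$. Your explicit minimality/divisibility analysis of $\omega_{\gamma}$ (via the fact that an odd square is $\equiv 1 \pmod{4}$) just fills in a step the paper leaves implicit.
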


\begin{proof}
	It is a direct consequence of the proof of Proposition \ref{prop 1}.
\end{proof}

The main theorem of this section can be stated as follow.

\begin{theorem}\label{thm 2}
	Let the notations be defined as above. Each equal-difference decomposition of a cyclotomic coset $c_{n/q}(\gamma)$ has the form
	$$c_{n/q}(\gamma) = \bigsqcup_{j=0}^{\omega-1}(\bigsqcup_{i \in I_{j}}c_{n/q^{t_{j,i}}}(\gamma_{j,i})),$$
	where $t_{j,i}= \frac{\tau}{|c_{n/q^{t_{j,i}}}(\gamma_{j,i})|}$ is a multiple of $\omega_{\gamma}$, and $\gamma_{j,i} \in c_{n/q^{\omega_{\gamma}}}(\gamma q^{j})$. Moreover, if it is required that each component of the decomposition has the same size, then it is in the form
	$$c_{n/q}(\gamma)= \bigsqcup_{j=0}^{t-1}c_{n/q^{t}}(\gamma q^{j}),$$
	where $t = \frac{\tau}{|c_{n/q^{t}}(\gamma q^{j})|}$ is a multiple of $\omega_{\gamma}$.
\end{theorem}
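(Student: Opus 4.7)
The plan is to reduce the general case to the coarsest equal-difference decomposition provided by Proposition \ref{prop 1} and then analyze each of its blocks separately. Let $c_{n/q}(\gamma)=\bigsqcup_{k}E_{k}$ be any equal-difference decomposition. By the corollary to Proposition \ref{prop 1}, each equal-difference subset $E_{k}$ is contained in a unique block $c_{n/q^{\omega_{\gamma}}}(\gamma q^{j_{k}})$ for some $j_{k}\in\{0,1,\cdots,\omega_{\gamma}-1\}$. Partitioning the index set of $\{E_{k}\}$ accordingly into $I_{0},\cdots,I_{\omega_{\gamma}-1}$ yields, for every $j$, an equal-difference decomposition of $c_{n/q^{\omega_{\gamma}}}(\gamma q^{j})$ into the pieces $\{E_{k}:k\in I_{j}\}$, which already accounts for the outer disjoint union over $j$ in the claimed form.

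Next, I would apply Lemma \ref{lem 5} to each $E_{k}$ to identify it as a $q^{t_{k}}$-cyclotomic coset $c_{n/q^{t_{k}}}(\gamma_{k}')$ where $t_{k}$ divides $\tau$; this immediately gives $t_{k}=\tau/|E_{k}|$. The key divisibility $\omega_{\gamma}\mid t_{k}$ is obtained from Theorem \ref{thm 1}: because $q$ is coprime to $n$, every representative in $c_{n/q}(\gamma)$ has the same denominator, so $n_{\gamma_{k}'}=n_{\gamma}$, and the equal-difference hypothesis on $E_{k}$ forces both $\mathrm{rad}(n_{\gamma})\mid q^{t_{k}}-1$ and $q^{t_{k}}\equiv 1\pmod{4}$ whenever $8\mid n_{\gamma}$. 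By the very definition of $\omega_{\gamma}$ in Proposition \ref{prop 1}, $\omega_{\gamma}$ is the smallest positive exponent satisfying both conditions simultaneously, so any $t_{k}$ meeting them must be a multiple of $\omega_{\gamma}$. Combined with $\gamma_{k}'\in E_{k}\subseteq c_{n/q^{\omega_{\gamma}}}(\gamma q^{j_{k}})$, this establishes the general form of the decomposition.

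For the \emph{moreover} clause, if all components $E_{k}$ have the same cardinality, then the integers $t_{k}=\tau/|E_{k}|$ collapse to a common value $t$, and each $E_{k}$ is a $q^{t}$-cyclotomic coset modulo $n$. The $q^{t}$-cyclotomic decomposition \eqref{eq 7} expresses $c_{n/q}(\gamma)$ as a disjoint union of exactly $\gcd(t,\tau)$ blocks, each of size $\tau/\gcd(t,\tau)$; matching cardinalities forces $\gcd(t,\tau)=t$, i.e. $t\mid\tau$, and the decomposition must coincide with $\bigsqcup_{j=0}^{t-1}c_{n/q^{t}}(\gamma q^{j})$, with $\omega_{\gamma}\mid t$ inherited from the first part.

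The main delicacy is the claim that $\omega_{\gamma}$ really is the minimal exponent satisfying Theorem \ref{thm 1}'s two conditions. This requires a short case analysis on the residue of $q^{\mathrm{ord}_{\mathrm{rad}(n_{\gamma})}(q)}$ modulo $4$ together with whether $8\mid n_{\gamma}$; in the borderline case $q^{\mathrm{ord}}\equiv 3\pmod{4}$ and $8\mid n_{\gamma}$, one must note that condition \textbf{(\romannumeral2)} of Theorem \ref{thm 1} fails at $\mathrm{ord}_{\mathrm{rad}(n_{\gamma})}(q)$ but is restored after doubling, which is exactly how $\omega_{\gamma}$ is constructed. Once this minimality is recorded, the whole proof amounts to an organized bookkeeping of Lemmas \ref{lem 5} and \ref{lem 6} together with the corollary of Proposition \ref{prop 1}.
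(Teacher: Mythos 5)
Your proposal is correct and follows essentially the same route as the paper: the first assertion is extracted from the proof of Proposition \ref{prop 1} (via Lemma \ref{lem 5}, Lemma \ref{lem 6}, and the minimality of $\omega_{\gamma}$), and the \emph{moreover} clause reduces to identifying the equal-size components with the full set of $q^{t}$-cyclotomic cosets inside $c_{n/q}(\gamma)$. The only cosmetic difference is that you obtain the final identification by the partition/counting property of $q^{t}$-cyclotomic cosets, whereas the paper enumerates them explicitly within each block $c_{n/q^{\omega_{\gamma}}}(\gamma q^{j})$; both arguments are sound.
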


\begin{proof}
	The first assertion is obtained directly from the proof of Proposition \ref{prop 1} and Remark \ref{re 1}. Now we prove the second assertion. Let
	$$c_{n/q}(\gamma) = \bigsqcup_{j=0}^{\omega-1}(\bigsqcup_{i \in I_{j}}c_{n/q^{t_{j,i}}}(\gamma_{j,i})),$$ 
	be an equal-difference decomposition of $c_{n/q}(\gamma)$, where $t_{j,i}= \frac{\tau}{|c_{n/q^{t_{j,i}}}(\gamma_{j,i})|}$. If all the component $c_{n/q^{t_{j,i}}}(\gamma_{j,i}))$ have the same size, say $\mu$, then all the $t_{j,i}$'s are equaling to $t = \frac{\tau}{\mu}$. Since $\omega_{\gamma} \mid t$, and 
	$$c_{n/q^{\omega_{\gamma}}}(\gamma q^{j}) = \bigsqcup_{i \in I_{j}}c_{n/q^{t_{j,i}}}(\gamma_{j,i})),$$
	it is straightforward to check that $c_{n/q^t}(\gamma_{j,i})$, $i \in I_{j}$, ranges unrepeated over $c_{n/q^{t}}(\gamma q^{j+i\omega_{\gamma}})$, $i=0,1,\cdots,\frac{t}{\omega_{\gamma}}-1$. Thus we have
	$$c_{n/q}(\gamma) = \bigsqcup_{j=0}^{\omega_{\gamma}-1}(\bigsqcup_{i =0}^{\frac{t}{\omega_{\gamma}}-1}c_{n/q^{t}}(\gamma q^{j+i\omega_{\gamma}})) = \bigsqcup_{j=0}^{t-1}c_{n/q^{t}}(\gamma q^{j}).$$
\end{proof}

From Theorem \ref{thm 2} the equal-difference decompositions of a coset which are given by cyclotomic decompositions are exactly those whose components are of the same size. The class of such decompositions are mainly of interest to us.

\begin{definition}
	\begin{description}
		\item[(1)] Let $c_{n/q}(\gamma)$ be a $q$-cyclotomic coset modulo $n$. A multiple equal-difference representation of $c_{n/q}(\gamma)$ is an equal-difference decomposition of $c_{n/q}(\gamma)$ which is given by a $q^{t}$-cyclotomic decomposition for some $t \in \mathbb{N}^{+}$. We denote by $\mathcal{MER}(c_{n/q}(\gamma))$ the class of multiple equal-difference representations of $c_{n/q}(\gamma)$.
		\item[(2)] A multiple equal-difference representation of $\mathcal{C}_{n/q}$ is a tuple
		$$\left(c_{n/q}(\gamma) = \bigsqcup_{j=0}^{\mathrm{gcd}(t,\tau_{\gamma})-1}c_{n/q^{t}}(\gamma q^{j})\right)_{c_{n/q}(\gamma) \in \mathcal{C}_{n/q}},$$
		where $\tau_{\gamma} = |c_{n/q}(\gamma)|$, and $t$ is a positive integer such that for every $c_{n/q}(\gamma) \in \mathcal{C}_{n/q}$, the decomposition $c_{n/q}(\gamma) = \bigsqcup\limits_{j=0}^{\mathrm{gcd}(t,\tau_{\gamma})-1}c_{n/q^{t}}(\gamma q^{j})$ is a multiple equal-difference representation of $c_{n/q}(\gamma)$. The class of multiple equal-difference representations of $\mathcal{C}_{n/q}$ are denoted by $\mathcal{MER}_{n/q}$.
	\end{description}
\end{definition}

Notice that the order on the set of all equal-difference decompositions of $c_{n/q}(\gamma)$ restricts to an order on the space $\mathcal{MER}(c_{n/q}(\gamma))$. Further, the orders on all $\mathcal{MER}(c_{n/q}(\gamma))$'s induce a natural order on $\mathcal{MER}_{n/q}$:
$$\left(c_{n/q}(\gamma) = \bigsqcup_{j=0}^{\mathrm{gcd}(t_{1},\tau_{\gamma})-1}c_{n/q^{t_{1}}}(\gamma q^{j})\right)_{c_{n/q}(\gamma) \in \mathcal{C}_{n/q}} \leq \left(c_{n/q}(\gamma) = \bigsqcup_{j=0}^{\mathrm{gcd}(t_{2},\tau_{\gamma})-1}c_{n/q^{t_{2}}}(\gamma q^{j})\right)_{c_{n/q}(\gamma) \in \mathcal{C}_{n/q}}$$
if and only if $\bigsqcup\limits_{j=0}^{\mathrm{gcd}(t_{1},\tau_{\gamma})-1}c_{n/q^{t_{1}}}(\gamma q^{j}) \leq \bigsqcup\limits_{j=0}^{\mathrm{gcd}(t_{2},\tau_{\gamma})-1}c_{n/q^{t_{2}}}(\gamma q^{j})$ for every $c_{n/q}(\gamma) \in \mathcal{C}_{n/q}$.

Summarizing the obtained results, we give a characterization of the spaces $\mathcal{MER}(c_{n/q}(\gamma))$ and $\mathcal{MER}_{n/q}$.

\begin{theorem}\label{thm 3}
	\begin{description}
		\item[(1)] Let $c_{n/q}(\gamma)$ be a $q$-cyclotomic coset modulo $n$, with $|c_{n/q}(\gamma)| = \tau$. Let $\Sigma(\tau;\omega_{\gamma})$ be the set of the divisors of $\tau$ that is divided by $\omega_{\gamma}$. Then there is an one-to-one correspondence
		$$\varphi_{c_{n/q}(\gamma)}: \Sigma(\tau;\omega_{\gamma}) \rightarrow \mathcal{MER}(c_{n/q}(\gamma)): \ t\mapsto c_{n/q}(\gamma) = \bigsqcup_{j=0}^{t-1}c_{n/q^{t}}(\gamma q^{j}) .$$
		Moreover, for any $t_{1}, t_{2} \in \Sigma(\tau;\omega_{\gamma})$, $t_{1}\mid t_{2}$ if and only if $\varphi_{c_{n/q}(\gamma)}(t_{2}) \leq \varphi_{c_{n/q}(\gamma)}(t_{1})$.
		\item[(2)] Let
		\begin{equation*}
			\omega = \left\{
			\begin{array}{lcl}
				2\mathrm{ord}_{\mathrm{rad}(n)}(q), \quad \mathrm{if} \ q^{\mathrm{ord}_{\mathrm{rad}(n)}(q)} \equiv 3 \pmod{4} \ \mathrm{and} \ 8 \mid n;\\
				\mathrm{ord}_{\mathrm{rad}(n)}(q), \quad \mathrm{otherwise}.
			\end{array} \right.
		\end{equation*}
		Then there is an one-to-one correspondence 
		$$\varphi_{n/q}: \Sigma(\mathrm{ord}_{n}(q);\omega)\rightarrow \mathcal{MER}_{n/q}; \ t\mapsto \left(c_{n/q}(\gamma) = \bigsqcup_{j=0}^{\mathrm{gcd}(t,\tau_{\gamma})-1}c_{n/q^{t}}(\gamma q^{j})\right)_{c_{n/q}(\gamma) \in \mathcal{C}_{n/q}}.$$
		Moreover, for any $t_{1},t_{2} \in \Sigma(\mathrm{ord}_{n}(q);\omega)$, $t_{1} \mid t_{2}$ if and only if $\varphi_{n/q}(t_{2}) \leq \varphi_{n/q}(t_{1})$.
	\end{description}
\end{theorem}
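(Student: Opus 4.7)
The plan is to view both statements as a reorganization of the information already established in Proposition \ref{prop 1}, Lemma \ref{lem 6} and Theorem \ref{thm 2}: Part (1) is essentially a reparametrization of the second assertion of Theorem \ref{thm 2}, while Part (2) propagates Part (1) uniformly over the whole family of cosets, for which one must verify that the global parameter $\omega$ dominates every coset-level $\omega_{\gamma}$.

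For Part (1), I would first show $\varphi_{c_{n/q}(\gamma)}$ is well defined: if $\omega_{\gamma}\mid t\mid\tau$, then $\mathrm{rad}(n_{\gamma})\mid q^{\omega_{\gamma}}-1$ lifts to $\mathrm{rad}(n_{\gamma})\mid q^{t}-1$, and, when $8\mid n_{\gamma}$, $q^{\omega_{\gamma}}\equiv 1\pmod 4$ lifts to $q^{t}\equiv 1\pmod 4$; Lemma \ref{lem 6} then says the $q^{t}$-cyclotomic decomposition is equal-difference, and $\gcd(t,\tau)=t$ gives exactly $t$ components. Since any $q^{t}$-cyclotomic decomposition has equal-size components, MERs are precisely the equal-difference decompositions with equal-size components, so surjectivity follows directly from the second assertion of Theorem \ref{thm 2}; injectivity is immediate by comparing component counts. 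The forward order implication $t_{1}\mid t_{2}\Rightarrow\varphi_{c_{n/q}(\gamma)}(t_{2})\leq\varphi_{c_{n/q}(\gamma)}(t_{1})$ is by grouping $q^{t_{2}}$-cosets inside each $q^{t_{1}}$-coset. The converse is the only subtle step: from $\varphi_{c_{n/q}(\gamma)}(t_{2})\leq\varphi_{c_{n/q}(\gamma)}(t_{1})$ one obtains $c_{n/q^{t_{2}}}(\gamma)\subseteq c_{n/q^{t_{1}}}(\gamma)$, hence $\gamma q^{t_{2}}\equiv\gamma q^{kt_{1}}\pmod n$ for some $k\geq 0$, so $\tau=\mathrm{ord}_{n_{\gamma}}(q)$ divides $t_{2}-kt_{1}$; choosing $k$ with $0\leq t_{2}-kt_{1}<t_{1}\leq\tau$ forces $t_{2}-kt_{1}=0$, i.e., $t_{1}\mid t_{2}$.

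For Part (2), the additional ingredients I need are $\tau_{\gamma}=\mathrm{ord}_{n_{\gamma}}(q)\mid\mathrm{ord}_{n}(q)$ (clear since $n_{\gamma}\mid n$) and $\omega_{\gamma}\mid\omega$ for every $\gamma$. The latter is the genuinely new computation: setting $a=\mathrm{ord}_{\mathrm{rad}(n_{\gamma})}(q)$ and $b=\mathrm{ord}_{\mathrm{rad}(n)}(q)$, one has $a\mid b$; a short case analysis on the parity of $b/a$ and on $q^{a}\pmod 4$ shows that the possible doubling in $\omega_{\gamma}$ is absorbed by that in $\omega$. Granted these, for $t\in\Sigma(\mathrm{ord}_{n}(q);\omega)$ one has $\omega_{\gamma}\mid\omega\mid t$ and $\omega_{\gamma}\mid\tau_{\gamma}$ (from Proposition \ref{prop 1}), so $\omega_{\gamma}\mid\gcd(t,\tau_{\gamma})\in\Sigma(\tau_{\gamma};\omega_{\gamma})$ and Part (1) yields an MER on each coset. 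Conversely, any MER of $\mathcal{C}_{n/q}$ indexed by $t^{\prime}$ forces $\omega_{\gamma}\mid\gcd(t^{\prime},\tau_{\gamma})$ for every $\gamma$ by Part (1); evaluating on a primitive coset (where $n_{\gamma}=n$, $\omega_{\gamma}=\omega$, $\tau_{\gamma}=\mathrm{ord}_{n}(q)$) gives $\omega\mid t^{\prime}$, and replacing $t^{\prime}$ by $t:=\gcd(t^{\prime},\mathrm{ord}_{n}(q))\in\Sigma(\mathrm{ord}_{n}(q);\omega)$ does not change any coset-wise decomposition, since $\gcd(t,\tau_{\gamma})=\gcd(t^{\prime},\tau_{\gamma})$ for all $\gamma$. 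Injectivity of $\varphi_{n/q}$ and the order statement then reduce to Part (1) applied to a primitive coset.

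The main obstacle I anticipate is twofold and essentially bookkeeping: the converse direction of the order statement in Part (1), where one must extract $t_{1}\mid t_{2}$ from a set-theoretic refinement via a minimality argument modulo $\tau$; and the divisibility $\omega_{\gamma}\mid\omega$, which hinges on correctly handling the mod-$4$ dichotomy as $n_{\gamma}$ varies inside $n$.
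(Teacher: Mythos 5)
Your proposal is correct and follows essentially the same route as the paper: Part (1) is deduced from Theorem \ref{thm 2} and Lemma \ref{lem 6}, and Part (2) is reduced to Part (1) coset by coset via the map $t \mapsto \gcd(t,\tau_{\eta})$ evaluated in particular on a primitive coset. Two local differences are worth recording. First, for the converse order implication in Part (1) the paper argues via sizes, $|c_{n/q^{t_{2}}}(\gamma)| = \tau/t_{2} \mid \tau/t_{1} = |c_{n/q^{t_{1}}}(\gamma)|$, hence $t_{1} \mid t_{2}$, which is a little slicker than your congruence argument $t_{2} \equiv k t_{1} \pmod{\tau}$ with $t_{1}\mid\tau$; both are valid. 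Second, and more substantively, the paper simply asserts that $\psi_{\eta}^{\prime}: t \mapsto \gcd(t,\tau_{\eta})$ is a surjection $\Sigma(\mathrm{ord}_{n}(q);\omega) \rightarrow \Sigma(\tau_{\eta};\omega_{\eta})$, which tacitly requires $\omega_{\eta} \mid \omega$; you correctly isolate this as the one genuinely new computation and your mod-$4$ case analysis (if $b/a$ is even then $2a \mid b$; if $b/a$ is odd then $q^{b}\equiv q^{a}\equiv 3 \pmod 4$ forces the doubling of $\omega$ as well) does establish it, so your write-up actually supplies a detail the paper leaves implicit.
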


\begin{remark}
	If one admits the natural order on the set $\Sigma(\tau,\omega_{\gamma})$ (resp. $\Sigma(\mathrm{ord}_{n}(q);\omega)$): for any $t_{1},t_{2} \in \Sigma(\tau,\omega_{\gamma})$ (resp. $t_{1},t_{2} \in \Sigma(\mathrm{ord}_{n}(q);\omega)$), $t_{1} \leq t_{2}$ if and only if $t_{1} \mid t_{2}$, then Theorem \ref{thm 3} can be rephrased simply as: The map $\varphi_{c_{n/q}(\gamma)}$ (resp. $\varphi_{n/q}$) is an anti-order-preserving one-to-one correspondence.
\end{remark}

\begin{proof}
	\begin{description}
		\item[(1)] By Theorem \ref{thm 2} and Remark \ref{re 1} the map $\varphi_{c_{n/q}(\gamma)}$ is a bijection. Let $t_{1},t_{2} \in \Sigma(\tau;\omega_{\gamma})$. If
		$$\varphi_{c_{n/q}(\gamma)}(t_{2}) = \bigsqcup_{j=0}^{t_{2}-1}c_{n/q^{t_{2}}}(\gamma q^{j}) \leq \bigsqcup_{j=0}^{t_{1}-1}c_{n/q^{t_{1}}}(\gamma q^{j}) = \varphi_{c_{n/q}(\gamma)}(t_{1}),$$
		then 
		$$|c_{n/q^{t_{2}}}(\gamma)| = \dfrac{\tau}{t_{2}} \mid \dfrac{\tau}{t_{1}} = |c_{n/q^{t_{1}}}(\gamma)|,$$
		which implies that $t_{1} \mid t_{2}$.
		
		Conversely, if $t_{1} \mid t_{2}$, then for $0 \leq j \leq t_{1}$ the coset $c_{n/q^{t_{1}}}(\gamma q^{j})$ has the $q^{t_{2}}$-cyclotomic decomposition
		$$c_{n/q^{t_{1}}}(\gamma q^{j}) = \bigsqcup_{i=0}^{\frac{t_{2}}{t_{1}}-1}c_{n/q^{t_{2}}}(\gamma q^{j+it_{1}}).$$
		Hence we have
		$$c_{n/q}(\gamma) = \bigsqcup_{j=0}^{t_{1}-1}c_{n/q^{t_{1}}}(\gamma q^{j}) = \bigsqcup_{j=0}^{t_{1}-1}(\bigsqcup_{i=0}^{\frac{t_{2}}{t_{1}}-1}c_{n/q^{t_{2}}}(\gamma q^{j+it_{1}})).$$
		\item[(2)] Given any $q$-cyclotomic coset $c_{n/q}(\eta)$ modulo $n$, there is a natural projection $\psi_{\eta}: \mathcal{MER}_{n/q}\rightarrow \mathcal{MER}(c_{n/q}(\eta))$ given by
		$$\left(c_{n/q}(\gamma) = \bigsqcup_{j=0}^{\mathrm{gcd}(t,\tau_{\gamma})-1}c_{n/q^{t}}(\gamma q^{j})\right)_{c_{n/q}(\gamma) \in \mathcal{C}_{n/q}} \mapsto c_{n/q}(\eta) = \bigsqcup_{j=0}^{\mathrm{gcd}(t,\tau_{\eta})-1}c_{n/q^{t}}(\eta q^{j}).$$
		On the other hand, sending $t \in \Sigma(\mathrm{ord}_{n}(q);\omega)$ to $\mathrm{gcd}(t,\tau_{\eta}) \in \Sigma(\tau_{\eta};\omega_{\eta})$ defines a surjection $\psi_{\eta}^{\prime}$. It is trivial to see that the following diagram commutes:
		\begin{equation}
			\begin{tikzcd}
				\Sigma(\mathrm{ord}_{n}(q);\omega) \arrow{r}{\varphi_{n/q}}  \arrow{d}{\psi_{\eta}^{\prime}}& \mathcal{MER}_{n/q} \arrow{d}{\psi_{\eta}}\\
				\Sigma(\tau_{\eta};\omega_{\eta}) \ar{r}{\varphi_{c_{n/q}(\eta)}} & \mathcal{MER}(c_{n/q}(\eta))
			\end{tikzcd}
		\end{equation}
		Now let $\eta$ range over all representatives of $q$-cyclotomic cosets modulo $n$, then Conclusion $(2)$ is obtained from $(1)$.
	\end{description}
\end{proof}

\section{Irreducible factorization in binomial form of $X^{n}-1$}\label{sec 4}
Let $q$ be a prime power, and $n$ be a positive integer coprime to $q$. In many computation problems, it is convenient if $X^{n}-1$ factors into irreducible binomials over $\mathbb{F}_{q}$. In this case, we say that the irreducible factorization of $X^{n}-1$ over $\mathbb{F}_{q}$ is in binomial form. The main goal of this section is to construct the equivalent correspondence, between the multiple equal-difference representations of $\mathcal{C}_{n/q}$ and the irreducible factorizations of $X^{n}-1$ in binomial form over extension fields of $\mathbb{F}_{q}$. Through the correspondence the results obtained in Section \ref{sec 2} and \ref{sec 3} can be translated into the problem of factorizing $X^{n}-1$.

Recall that we fix a family of primitive roots of unity
$$\{\zeta_{n}| \ \mathrm{gcd}(n,q)=1\}$$
in the algebraic closure $\overline{\mathbb{F}_{q}}$ over $\mathbb{F}_{q}$, which satisfies that for any integers $m$ and $n$, coprime to $q$, such that $m \mid n$, it holds that $\zeta_{n}^{\frac{n}{m}} = \zeta_{m}$. Let
$$c_{n/q}(\gamma) = \{\gamma,\gamma q,\cdots,\gamma q^{\tau-1}\}$$
be a $q$-cyclotomic coset modulo $n$. Then $c_{n/q}(\gamma)$ induces an irreducible factor of $X^{n}-1$:
$$M_{c_{n/q}(\gamma)}(X) = (X-\zeta_{n}^{\gamma})(X-\zeta_{n}^{\gamma q})\cdots (X-\zeta_{n}^{\gamma q^{\tau-1}}).$$
Moreover, the irreducible factorization of $X^{n}-1$ over $\mathbb{F}_{q}$ is given by
$$X^{n}-1 = \prod_{\gamma \in \mathcal{CR}_{n/q}} M_{c_{n/q}(\gamma)}(X),$$
where $\mathcal{CR}_{n/q}$ is any full set of representatives of $q$-cyclotomic cosets modulo $n$.

\begin{lemma}\label{lem 7}
	Let $n_{1}$ and $n_{2}$ be positive integers coprime to $q$. Let $\gamma_{1} \in \mathbb{Z}/n_{1}\mathbb{Z}$ and $\gamma_{2} \in \mathbb{Z}/n_{2}\mathbb{Z}$. The cosets $c_{n_{1}/q}(\gamma_{1})$ and $c_{n_{2}/q}(\gamma_{2})$ give rise to the same irreducible polynomial over $\mathbb{F}_{q}$ if and only if they have the same primitive form.
\end{lemma}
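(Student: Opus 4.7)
The plan is to pass from the coset $c_{n/q}(\gamma)$ to its primitive form via the compatibility $\zeta_n^{n/m} = \zeta_m$ of the fixed family of roots of unity, showing that a coset and its primitive form always determine the same irreducible polynomial; then the ``only if" direction will reduce to showing that two primitive cosets that determine the same polynomial are identical, which follows from order considerations.

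First I would establish the identity $M_{c_{n/q}(\gamma)}(X) = M_{c_{n_{\gamma}/q}(\widetilde{\gamma})}(X)$ for any coset $c_{n/q}(\gamma)$. Writing $m = \gcd(\gamma,n)$, so that $\gamma = m\widetilde{\gamma}$ and $n = m n_{\gamma}$, the compatibility of the family $\zeta$ gives $\zeta_{n}^{m} = \zeta_{n_{\gamma}}$, and hence $\zeta_{n}^{\gamma q^{j}} = \zeta_{n_{\gamma}}^{\widetilde{\gamma}q^{j}}$ for every $j$. Since the natural bijection $c_{n/q}(\gamma) \to c_{n_{\gamma}/q}(\widetilde{\gamma})$ described after Lemma \ref{lem 4} sends $\gamma q^{j}$ to $\widetilde{\gamma}q^{j}$, the two polynomials have the same root set, hence are equal. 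This identity immediately yields the ``if" direction: if $c_{n_{1}/q}(\gamma_{1})$ and $c_{n_{2}/q}(\gamma_{2})$ have the same primitive form $c_{n'/q}(\gamma')$, then both $M_{c_{n_{i}/q}(\gamma_{i})}(X)$ equal $M_{c_{n'/q}(\gamma')}(X)$.

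For the ``only if" direction, using the identity above I may replace each $c_{n_{i}/q}(\gamma_{i})$ by its primitive form, so I assume $\gcd(\gamma_{i},n_{i}) = 1$ for $i=1,2$ and $M_{c_{n_{1}/q}(\gamma_{1})}(X) = M_{c_{n_{2}/q}(\gamma_{2})}(X)$. The key observation is that every root of $M_{c_{n_{i}/q}(\gamma_{i})}(X)$ is of the form $\zeta_{n_{i}}^{\gamma_{i}q^{j}}$, and since $\gcd(\gamma_{i}q^{j},n_{i})=1$, such a root has multiplicative order exactly $n_{i}$. Comparing orders on both sides forces $n_{1}=n_{2}$. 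Call this common modulus $n$; then $\zeta_{n}^{\gamma_{2}}$ is a root of $M_{c_{n/q}(\gamma_{1})}(X)$, so $\gamma_{2} \equiv \gamma_{1}q^{j} \pmod{n}$ for some $j$, i.e.\ $\gamma_{2} \in c_{n/q}(\gamma_{1})$, whence $c_{n/q}(\gamma_{1}) = c_{n/q}(\gamma_{2})$.

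The only slightly delicate point is the clean transfer between the moduli $n$ and $n_{\gamma}$ at the level of the chosen roots of unity; everything else is then a routine consequence of the fact that the roots of an irreducible factor of $X^{n}-1$ form a single Frobenius orbit. Overall the lemma should follow in a short argument once the compatibility of the family $\{\zeta_{n}\}$ is invoked.
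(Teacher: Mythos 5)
Your proof is correct, and the first half (the rescaling identity and the ``if'' direction) is essentially the paper's argument: the paper proves the claim $M_{c_{dn/q}(d\gamma)}(X) = M_{c_{n/q}(\gamma)}(X)$ by going \emph{up} from a coset to a multiple of the modulus, whereas you go \emph{down} to the primitive form via $\zeta_{n}^{m} = \zeta_{n_{\gamma}}$; these are the same use of the compatibility of the family $\zeta$. Where you genuinely diverge is in the ``only if'' direction. The paper lifts both cosets to the common modulus $n = \mathrm{lcm}(n_{1},n_{2})$ and invokes the separability of $X^{n}-1$ (no repeated irreducible factors) to conclude that the two lifted cosets modulo $n$ coincide, hence share a primitive form. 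You instead reduce to the primitive forms directly and compare the multiplicative orders of the roots: since $\gcd(\gamma_{i}q^{j},n_{i})=1$, every root of $M_{c_{n_{i}/q}(\gamma_{i})}(X)$ has order exactly $n_{i}$, which forces $n_{1}=n_{2}$, after which injectivity of $\gamma \mapsto \zeta_{n}^{\gamma}$ identifies the cosets. Your route is slightly more self-contained (it does not appeal to the bijection between cosets modulo the lcm and the irreducible factors of $X^{\mathrm{lcm}(n_1,n_2)}-1$), while the paper's route avoids any order computation by leaning on an already-established fact; both are short and both are valid.
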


\begin{proof}
	First we prove the following claim: For any coset $c_{n/q}(\gamma)$ and any positive integer $d$ coprime to $q$, $M_{c_{dn/q}(d\gamma)}(X) = M_{c_{n/q}(\gamma)}(X)$. It is obvious that $M_{c_{dn/q}(d\gamma)}(X)$ and $M_{c_{n/q}(\gamma)}(X)$ have the same size, say $\tau$. Since $\zeta_{dn}^{d\gamma} = \zeta_{n}^{\gamma}$, then it holds that
	\begin{align*}
		M_{c_{dn/q}(d\gamma)}(X) &= (X-\zeta_{dn}^{d\gamma})(X-\zeta_{dn}^{d\gamma q})\cdots (X-\zeta_{dn}^{d\gamma q^{\tau-1}})\\ 
		&= (X-\zeta_{n}^{\gamma})(X-\zeta_{n}^{\gamma q})\cdots (X-\zeta_{n}^{\gamma q^{\tau-1}}) = M_{c_{n/q}(\gamma)}(X).
	\end{align*}
	
	Now if $c_{n_{1}/q}(\gamma_{1})$ and $c_{n_{2}/q}(\gamma_{2})$ have the same primitive form, by the above claim
	$$M_{c_{n_{1}/q}(\gamma_{1})}(X) = M_{c_{n_{1,\gamma_{1}}/q}(\widetilde{\gamma_{1}})}(X) = M_{c_{n_{2,\gamma_{2}}/q}(\widetilde{\gamma_{2}})}(X) = M_{c_{n_{2}/q}(\gamma_{2})}(X),$$
	where $n_{i,\gamma_{i}} = \frac{n_{i}}{\mathrm{gcd}(n_{i},\gamma_{i})}$ for $i=1,2$. Conversely, suppose that $M_{c_{n_{1}/q}(\gamma_{1})}(X) =M_{c_{n_{2}/q}(\gamma_{2})}(X)$. Let $n = \mathrm{lcm}(n_{1},n_{2})$ and $d_{i} = \frac{n}{n_{i}}$ for $i = 1,2$. Also by the above claim one obtains 
	$$M_{c_{d_{1}n_{1}/q}(d_{1}\gamma_{1})}(X) =M_{c_{d_{2}n_{2}/q}(d_{2}\gamma_{2})}(X).$$
	Note that $X^{n}-1$ does not have repeated factor, therefore $c_{n/q}(d_{1}\gamma_{1}) = c_{n/q}(d_{2}\gamma_{2})$. It follows immediately that they have the same primitive form.
\end{proof}

\begin{theorem}\label{thm 4}
	Let $c_{n/q}(\gamma)$ be a $q$-cyclotomic coset modulo $n$, and $M_{c_{n/q}(\gamma)}(X)$ be the irreducible polynomial induced by $c_{n/q}(\gamma)$. Then $M_{c_{n/q}(\gamma)}(X)$ is a binomial if and only if $c_{n/q}(\gamma)$ is of equal difference.
\end{theorem}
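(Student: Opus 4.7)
The plan is to translate the assertion into a statement about the multiplicative coset structure of the roots of $M_{c_{n/q}(\gamma)}(X)$, using the compatibility relation $\zeta_{n}^{n/\tau} = \zeta_{\tau}$ built into the fixed family $\zeta$, and then to reduce the reverse direction to Lemma \ref{lem 4}.

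For the forward direction, suppose $c_{n/q}(\gamma)$ is of equal difference, so $\tau \mid n$ and the exponents are $\gamma, \gamma + \frac{n}{\tau}, \ldots, \gamma + (\tau-1)\frac{n}{\tau}$. Writing $\zeta_{\tau} = \zeta_{n}^{n/\tau}$, the roots of $M_{c_{n/q}(\gamma)}$ are exactly $\zeta_{n}^{\gamma}\cdot\zeta_{\tau}^{j}$ for $j = 0, 1, \ldots, \tau-1$, that is, the multiplicative coset $\zeta_{n}^{\gamma}\mu_{\tau}$. Applying $\prod_{j=0}^{\tau-1}(Y - \zeta_{\tau}^{j}) = Y^{\tau} - 1$ to $Y = X/\zeta_{n}^{\gamma}$ and clearing denominators gives
$$M_{c_{n/q}(\gamma)}(X) = X^{\tau} - \zeta_{n}^{\gamma\tau},$$
a binomial.

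For the reverse direction, suppose $M_{c_{n/q}(\gamma)}(X)$ is a binomial. Being monic of degree $\tau$ and having only nonzero roots (its roots are $n$-th roots of unity), it must take the form $X^{\tau} - a$ with $a \neq 0$. Hence the roots form a coset $\beta\cdot\mu_{\tau}$ inside $\overline{\mathbb{F}_{q}}^{\ast}$. Since $M_{c_{n/q}(\gamma)} \mid X^{n}-1$, every root lies in $\mu_{n}$; taking ratios of roots shows $\mu_{\tau} \subseteq \mu_{n}$, whence $\tau \mid n$. Now $\mu_{\tau} = \langle \zeta_{n}^{n/\tau}\rangle$, so the relation $\zeta_{n}^{\gamma q^{j}}/\zeta_{n}^{\gamma} = \zeta_{n}^{\gamma(q^{j}-1)} \in \mu_{\tau}$ forces $\frac{n}{\tau} \mid \gamma(q^{j}-1)$ for every $j$. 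Specializing to $j = 1$ yields $\gamma q \equiv \gamma \pmod{\frac{n}{\tau}}$, so both conditions of Lemma \ref{lem 4} are satisfied and $c_{n/q}(\gamma)$ is of equal difference.

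No single step is hard; the only point requiring care is to invoke the compatibility $\zeta_{n}^{n/\tau} = \zeta_{\tau}$ when computing $M_{c_{n/q}(\gamma)}(X)$ explicitly as $X^{\tau}-\zeta_{n}^{\gamma\tau}$, so that the identification of the root set with $\zeta_{n}^{\gamma}\mu_{\tau}$ is unambiguous; a different choice of primitive roots would only alter the constant $\zeta_{n}^{\gamma\tau}$, not the binomial nature of the factor, but keeping the fixed family $\zeta$ makes the bijection used in Section \ref{sec 4} canonical.
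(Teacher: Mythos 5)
Your proof is correct, but it follows a genuinely different route from the paper in both directions. For the forward implication the paper expands $M_{c_{n/q}(\gamma)}(X)$ into elementary symmetric functions and kills the coefficients $a_{1},\dots,a_{\tau-1}$ one by one through a fairly heavy induction on power-sum-type identities; you instead observe that the root set is the multiplicative coset $\zeta_{n}^{\gamma}\mu_{\tau}$ and substitute $Y=X/\zeta_{n}^{\gamma}$ into $\prod_{j}(Y-\zeta_{\tau}^{j})=Y^{\tau}-1$, which yields $X^{\tau}-\zeta_{n}^{\gamma\tau}$ in one line — a substantially cleaner computation that also produces the constant term explicitly. For the reverse implication the paper computes the order $e$ of the constant term, shows $e\mid q-1$, views $M_{c_{n/q}(\gamma)}$ as a factor of $X^{\tau e}-1$, and then transfers the equal-difference property back through Lemmas \ref{lem 1} and \ref{lem 7}; you bypass that detour entirely by reading off both hypotheses of Lemma \ref{lem 4} directly from the group structure of the roots: $\mu_{\tau}\subseteq\mu_{n}$ gives $\tau\mid n$, and $\zeta_{n}^{\gamma(q-1)}\in\mu_{\tau}=\langle\zeta_{n}^{n/\tau}\rangle$ gives $\gamma q\equiv\gamma\pmod{\frac{n}{\tau}}$. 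The only point you leave implicit is that the $\tau$ roots of $X^{\tau}-a$ are distinct (so that $\mu_{\tau}$ really has $\tau$ elements and the root set is a full coset $\beta\mu_{\tau}$); this follows immediately from $M_{c_{n/q}(\gamma)}\mid X^{n}-1$ with $\gcd(n,q)=1$, and is worth one sentence. Overall your argument is shorter, avoids the symmetric-function induction and the primitive-form lemmas, and stays entirely inside the framework of Lemma \ref{lem 4}.
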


\begin{proof}
	Assume that $c_{n/q}(\gamma)$ is an equal-difference coset, with common difference $d$. As for the case that $n=d$, which is equivalent to that $c_{n/q}(\gamma)$ is an one-element-set, the conclusion is trivial, without losing generality we suppose that $n > d$. The coset $c_{n/q}(\gamma)$ can be written as
	$$c_{n/q}(\gamma) = \{\gamma,\gamma+d,\cdots,\gamma+(\tau-1)d\},$$
	where $\tau d =n$. Expand $M_{c_{n/q}(\gamma)}(X)$ as
	$$M_{c_{n/q}(\gamma)}(X) = (X-\zeta_{n}^{\gamma})(X-\zeta_{n}^{\gamma+d})\cdots(X-\zeta_{n}^{\gamma+(\tau-1)d}) = X^{\tau} + a_{1}X^{\tau-1} + \cdots + a_{\tau-1}X +a_{\tau}.$$
	We prove by induction that $a_{1} = a_{2} = a_{\tau-1} = 0$. First, it can be computed directly that
	$$a_{1} = -(\zeta_{n}^{\gamma} + \cdots + \zeta_{n}^{\gamma+(\tau-1)d}) = -\zeta_{n}^{\gamma}\cdot \dfrac{\zeta_{n}^{n}-1}{\zeta_{n}^{d}-1} = 0,$$
	as $\zeta_{n}^{d}-1 \neq 0$. Now suppose that $a_{1} = \cdots = a_{k} = 0$, where $1 \leq k < \tau-1$. Note that for any $1 \leq \ell < \tau$,
	$$a_{\ell} = (-1)^{\ell}\sum_{0 \leq j_{1} < \cdots < \cdots j_{\ell} \leq \tau-1}\zeta_{n}^{\gamma+j_{1}d}\cdots\zeta_{n}^{\gamma+j_{\ell}d} = (-1)^{\ell}\dfrac{1}{(\ell)!}\zeta_{n}^{\ell\gamma}\cdot\sideset{}{^{\prime}}\sum_{j_{1},\cdots,j_{\ell}}\zeta_{n}^{j_{1}d}\cdots\zeta_{n}^{j_{\ell}d},$$
	where the sum $\sideset{}{^{\prime}}\sum\limits_{j_{1},\cdots,j_{\ell}}$ takes all $\ell$-tuples $(j_{1},\cdots,j_{\ell}) \in \{0,1,\cdots,\tau-1\}^{\ell}$ such that $j_{1},\cdots,j_{\ell}$ are pairwise distinct. As $a_{k}= (-1)^{k}\dfrac{1}{k!}\zeta_{n}^{k\gamma}\cdot\sideset{}{^{\prime}}\sum\limits_{j_{1},\cdots,j_{k}}\zeta_{n}^{j_{1}d}\cdots\zeta_{n}^{j_{k}d} = 0$, therefore
	\begin{align*}
		\sideset{}{^{\prime}}\sum\limits_{j_{1},\cdots,j_{k+1}}\zeta_{n}^{j_{1}d}\cdots\zeta_{n}^{j_{k+1}d} &= \sum_{j_{1}=0}^{\tau-1}\zeta_{n}^{j_{1}d}(\sideset{}{^{\prime}}\sum\limits_{\substack{j_{2},\cdots,j_{k+1}\\ j_{i} \neq j_{1}}}\zeta_{n}^{j_{2}d}\cdots\zeta_{n}^{j_{k+1}d})\\
		&= \sum_{j_{1}=0}^{\tau-1}\zeta_{n}^{j_{1}d}(-k\cdot\sideset{}{^{\prime}}\sum\limits_{\substack{j_{2},\cdots,j_{k}\\ j_{i} \neq j_{1}}}\zeta_{n}^{j_{1}d}\zeta_{n}^{j_{2}d}\cdots\zeta_{n}^{j_{k}d})\\
		&= -k\cdot \sum_{j_{1}=0}^{\tau-1}\zeta_{n}^{2j_{1}d}(\sideset{}{^{\prime}}\sum\limits_{\substack{j_{2},\cdots,j_{k}\\ j_{i} \neq j_{1}}}\zeta_{n}^{j_{2}d}\cdots\zeta_{n}^{j_{k}d}),
	\end{align*}
	where the second equality holds because $\sideset{}{^{\prime}}\sum\limits_{j_{1},\cdots,j_{k}}\zeta_{n}^{j_{1}d}\cdots\zeta_{n}^{j_{k}d} = 0$ and each term $\zeta_{n}^{j_{1}d}\zeta_{n}^{j_{2}d}\cdots\zeta_{n}^{j_{k}d}$ for a fixed $j_{1}$ appears as a summand in $\sideset{}{^{\prime}}\sum\limits_{j_{1},\cdots,j_{k}}\zeta_{n}^{j_{1}d}\cdots\zeta_{n}^{j_{k}d}$ by $k$ times. Applying the same argument successively with $a_{k-1}= \cdots = a_{1}=0$ yields that 
	$$\sideset{}{^{\prime}}\sum\limits_{j_{1},\cdots,j_{k+1}}\zeta_{n}^{j_{1}d}\cdots\zeta_{n}^{j_{k+1}d} = (-1)^{k}k!\cdot \sum_{j_{1}=0}^{\tau-1}\zeta_{n}^{(k+1)j_{1}d},$$
	which indicates that 
	$$a_{k+1}= -\frac{1}{k+1}\zeta_{n}^{(k+1)\gamma}\cdot\sum\limits_{j=0}^{\tau-1}\zeta_{n}^{(k+1)jd}.$$
	Since $k+1 < \tau$, then $\zeta_{n}^{(k+1)d} \neq 1$ and 
	$$a_{k+1}= -\frac{1}{k+1}\zeta_{n}^{(k+1)\gamma}\cdot\dfrac{\zeta_{n}^{(k+1)\tau d}-1}{\zeta_{n}^{(k+1)d}-1} = 0.$$
	By induction it holds that $a_{1} = a_{2} = a_{\tau-1} = 0$.
	
	To prove the opposite direction, we assume that $c_{n/q}(\gamma) = \{\gamma,\gamma q,\cdots,\gamma q^{\tau-1}\}$ induces a binomial, that is,
	$$M_{c_{n/q}(\gamma)}(X) = (X-\zeta_{n}^{\gamma})(X-\zeta_{n}^{\gamma q})\cdots (X-\zeta_{n}^{\gamma q^{\tau-1}}) = X^{\tau} - \zeta_{n}^{\gamma\cdot\frac{q^{\tau}-1}{q-1}}.$$
	Since $n \mid \gamma q^{\tau} -\gamma$, the order $e = \mathrm{ord}(\zeta_{n}^{\gamma\cdot\frac{q^{\tau}-1}{q-1}})$ is a divisor of $q-1$. Note that
	$$X^{\tau}-\zeta_{n}^{\gamma\cdot\frac{q^{\tau}-1}{q-1}} \mid X^{\tau e}-1.$$
	Then $\frac{\tau e}{\tau} = e \mid q-1$ implies that the $q$-cyclotomic coset modulo $\tau e$ which induces $M_{c_{n/q}(\gamma)}(X)$ is of equal difference. Now the conclusion follows from Lemma \ref{lem 1} and \ref{lem 7}.
\end{proof}

Combining Theorem \ref{thm 1}, Corollary \ref{coro 1} and Theorem \ref{thm 4} yields the following criteria. In particular, the second part of Corollary \ref{coro 2} recovers the equivalent condition for $X^{n}-1$ factoring into irreducible binomials over $\mathbb{F}_{q}$ given in \cite{Martinez}.

\begin{corollary}\label{coro 2}
	Let the notations be given as above. Then we have
	\begin{description}
		\item[(1)] The polynomial $M_{c_{n/q}(\gamma)}(X)$ is a binomial if and only if
		\begin{description}
			\item[(\romannumeral1)] $\mathrm{rad}(n_{\gamma}) \mid q-1$;
			\item[(\romannumeral2)] $q \equiv 1 \pmod{4}$ if $8 \mid n_{\gamma}$.
		\end{description}
		\item[(2)] The polynomial $X^{n}-1$ factors into irreducible binomials over $\mathbb{F}_{q}$ if and only if 
		\begin{description}
			\item[(\romannumeral1)] $\mathrm{rad}(n) \mid q-1$;
			\item[(\romannumeral2)] $q \equiv 1 \pmod{4}$ if $8 \mid n$.
		\end{description}
	\end{description}
\end{corollary}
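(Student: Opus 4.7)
The plan is to obtain both statements as immediate consequences of the theorems and corollary already established, by reading off the chain of equivalences.

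For part (1), I would start from Theorem \ref{thm 4}, which asserts that $M_{c_{n/q}(\gamma)}(X)$ is a binomial if and only if $c_{n/q}(\gamma)$ is an equal-difference coset. Next, I would apply Theorem \ref{thm 1}, which characterizes equal-difference cosets by exactly the two conditions (\romannumeral1) $\mathrm{rad}(n_{\gamma})\mid q-1$ and (\romannumeral2) $q\equiv 1\pmod 4$ whenever $8\mid n_{\gamma}$. Chaining these two equivalences gives (1) without further work.

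For part (2), I would use the decomposition
$$X^{n}-1=\prod_{\gamma\in\mathcal{CR}_{n/q}}M_{c_{n/q}(\gamma)}(X)$$
recalled in Section \ref{sec 4}. Since each $M_{c_{n/q}(\gamma)}(X)$ is irreducible over $\mathbb{F}_{q}$, the factorization of $X^{n}-1$ is into irreducible binomials if and only if each $M_{c_{n/q}(\gamma)}(X)$ is a binomial. By Theorem \ref{thm 4} again, this is equivalent to saying that every $q$-cyclotomic coset modulo $n$ is of equal difference. Corollary \ref{coro 1} then translates this property into conditions (\romannumeral1) $\mathrm{rad}(n)\mid q-1$ and (\romannumeral2) $q\equiv 1\pmod 4$ whenever $8\mid n$, yielding (2).

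There is essentially no obstacle here: the corollary is a packaging step. The only point worth a sentence is the observation that the product formula for $X^{n}-1$ has no repeated factors (because $\mathrm{gcd}(n,q)=1$), so that ``$X^{n}-1$ factors into irreducible binomials'' is unambiguous and coincides with ``every $M_{c_{n/q}(\gamma)}(X)$ is a binomial''. After this remark, both parts follow by direct citation of Theorems \ref{thm 1}, \ref{thm 4} and Corollary \ref{coro 1}.
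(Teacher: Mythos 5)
Your proposal is correct and follows exactly the route the paper intends: the paper introduces this corollary with the sentence ``Combining Theorem \ref{thm 1}, Corollary \ref{coro 1} and Theorem \ref{thm 4} yields the following criteria,'' which is precisely your chain of equivalences. Your added remark that the irreducible factors of $X^{n}-1$ are distinct (since $\gcd(n,q)=1$) is a harmless and accurate clarification.
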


Now we turn to the general case. Let $c_{n/q}(\gamma)$ be a $q$-cyclotomic coset modulo $n$, and $\tau = |c_{n/q}(\gamma)|$. Let $t$ be a positive integer. Then $c_{n/q}(\gamma)$ has the $q^{t}$-cyclotomic decomposition
$$c_{n/q}(\gamma) = \bigsqcup_{j=0}^{t^{\prime}-1}c_{n/q^{t}}(\gamma q^{j}),$$
where $t^{\prime} = \mathrm{gcd}(t,\tau)$. Notice that the $q^{t}$-decomposition of $c_{n/q}(\gamma)$ give rise to the irreducible factorization
$$M_{c_{n/q}(\gamma)}(X) = \prod_{j=0}^{t^{\prime}-1}M_{c_{n/q^{t}}(\gamma q^{j})}(X)$$
of $M_{c_{n/q}(\gamma)}(X)$ over $\mathbb{F}_{q^{t}}$. Then by Theorem \ref{thm 4} $M_{c_{n/q}(\gamma)}(X)$ factors into irreducible binomials over $\mathbb{F}_{q^{t}}$ if and only if $c_{n/q^{t}}(\gamma q^{j})$, $j=0,1,\cdots,t^{\prime}-1$, are all equal-difference $q^{t}$-cyclotomic cosets modulo $n$, that is, the $q^{t}$-cyclotomic decomposition of $c_{n/q}(\gamma)$ is a multiple equal-difference representation. Hence we prove the following proposition.

\begin{proposition}\label{prop 2}
	Let $c_{n/q}(\gamma)$ be a $q$-cyclotomic coset modulo $n$. Then the irreducible factorization of $M_{c_{n/q}(\gamma)}(X)$ over $\mathbb{F}_{q^{t}}$ is in binomial form if and only if the $q^{t}$-cyclotomic decomposition of $c_{n/q}(\gamma)$ is a multiple equal-difference decomposition.
\end{proposition}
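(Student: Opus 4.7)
The plan is to pass through the correspondence between cyclotomic cosets and irreducible factors of $X^{n}-1$, applied now at the level $\mathbb{F}_{q^{t}}$ rather than $\mathbb{F}_{q}$. The starting point is the $q^{t}$-cyclotomic decomposition
$$c_{n/q}(\gamma) = \bigsqcup_{j=0}^{t^{\prime}-1}c_{n/q^{t}}(\gamma q^{j}), \quad t^{\prime} = \mathrm{gcd}(t,\tau),$$
established in Section \ref{sec 3}. First I would observe that since the $q^{t}$-cyclotomic cosets modulo $n$ correspond exactly to the irreducible factors of $X^{n}-1$ over $\mathbb{F}_{q^{t}}$, and $M_{c_{n/q}(\gamma)}(X)$ divides $X^{n}-1$, the factorization
$$M_{c_{n/q}(\gamma)}(X) = \prod_{j=0}^{t^{\prime}-1}M_{c_{n/q^{t}}(\gamma q^{j})}(X)$$
is precisely the irreducible factorization of $M_{c_{n/q}(\gamma)}(X)$ over $\mathbb{F}_{q^{t}}$. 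This reduces the binomial-form condition for the whole factorization to the binomial-form condition on each individual factor $M_{c_{n/q^{t}}(\gamma q^{j})}(X)$.

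Next I would apply Theorem \ref{thm 4} to the base field $\mathbb{F}_{q^{t}}$ in place of $\mathbb{F}_{q}$: the polynomial $M_{c_{n/q^{t}}(\gamma q^{j})}(X)$ is a binomial over $\mathbb{F}_{q^{t}}$ if and only if the $q^{t}$-cyclotomic coset $c_{n/q^{t}}(\gamma q^{j})$ is equal-difference. Taking the conjunction over $j = 0,1,\ldots,t^{\prime}-1$, the factorization of $M_{c_{n/q}(\gamma)}(X)$ over $\mathbb{F}_{q^{t}}$ is in binomial form if and only if every component of the $q^{t}$-cyclotomic decomposition of $c_{n/q}(\gamma)$ is equal-difference, which is precisely the definition of a multiple equal-difference representation.

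The only point that requires any care is the first step, namely justifying that the displayed factorization really is the irreducible factorization over $\mathbb{F}_{q^{t}}$; the rest is an immediate application of Theorem \ref{thm 4}. This is not genuinely an obstacle: it follows from the fact that Theorem \ref{thm 4} and its ambient setup work verbatim with $q$ replaced by the prime power $q^{t}$ and with the same compatible family of primitive roots of unity $\zeta$, since $\mathrm{gcd}(n,q^{t}) = 1$. Thus the proof proposal amounts to stringing together the $q^{t}$-cyclotomic decomposition, the coset-to-irreducible-factor correspondence over $\mathbb{F}_{q^{t}}$, and Theorem \ref{thm 4}, with no extra computation needed.
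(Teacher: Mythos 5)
Your proposal is correct and follows essentially the same route as the paper: the paper likewise identifies the product $\prod_{j=0}^{t'-1}M_{c_{n/q^{t}}(\gamma q^{j})}(X)$ as the irreducible factorization of $M_{c_{n/q}(\gamma)}(X)$ over $\mathbb{F}_{q^{t}}$ and then applies Theorem \ref{thm 4} with $q$ replaced by $q^{t}$ to each factor. Your extra remark justifying that Theorem \ref{thm 4} applies verbatim over $\mathbb{F}_{q^{t}}$ with the same compatible family $\zeta$ is a point the paper leaves implicit, but it is the right justification.
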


For any positive integer $t$, the irreducible factorization of $M_{c_{n/q}(\gamma)}(X)$ (resp. $X^{n}-1$) over $\mathbb{F}_{q^{t}}$ is the same as that over $\mathbb{F}_{q^{t^{\prime}}}$, where $t^{\prime} = \mathrm{gcd}(t,\tau)$ (resp. $t^{\prime} = \mathrm{gcd}(t,\mathrm{ord}_{n}(q))$). Therefore it does not lose generality that we restrict the attention to divisors of $\tau$ (resp. $\mathrm{ord}_{n}(q)$). Then Proposition \ref{prop 2} gives the following equivalent correspondences.

\begin{theorem}\label{thm 5}
	\begin{description}
		\item[(1)] There is a one-to-one correspondence from $\mathcal{MER}(c_{n/q}(\gamma))$ onto the set of extension fields of $\mathbb{F}_{q}$ contained in $\mathbb{F}_{q^{\tau}}$ where $M_{c_{n/q}(\gamma)}(X)$ has irreducible factorization in binomial:
		$$\chi_{c_{n/q}(\gamma)}: c_{n/q}(\gamma) = \bigsqcup_{j=0}^{t-1}c_{n/q^{t}}(\gamma q^{j}) \mapsto \mathbb{F}_{q^{t}},$$
		where $t$ is a divisor of $\tau$ that is divided by $\omega_{\gamma}$. Moreover, for any multiple equal-difference representations 
		$$c_{n/q}(\gamma) = \bigsqcup_{j=0}^{t_{1}-1}c_{n/q^{t_{1}}}(\gamma q^{j}) = \bigsqcup_{j=0}^{t_{2}-1}c_{n/q^{t_{2}}}(\gamma q^{j}),$$
		the representations satisfy $\bigsqcup\limits_{j=0}^{t_{2}-1}c_{n/q^{t_{2}}}(\gamma q^{j}) \leq \bigsqcup\limits_{j=0}^{t_{1}-1}c_{n/q^{t_{1}}}(\gamma q^{j})$ if and only if $\mathbb{F}_{q^{t_{1}}} \subseteq \mathbb{F}_{q^{t_{2}}}$.
		\item[(2)] There is a one-to-one correspondence from $\mathcal{MER}_{n/q}$ onto the set of extension fields of $\mathbb{F}_{q}$ contained in $\mathbb{F}_{q^{\mathrm{ord}_{n}(q)}}$ where $X^{n}-1$ has irreducible factorization in binomial:
		$$\chi_{n/q}: \left(c_{n/q}(\gamma) = \bigsqcup_{j=0}^{\mathrm{gcd}(t,\tau_{\gamma})-1}c_{n/q^{t}}(\gamma q^{j})\right)_{c_{n/q}(\gamma) \in \mathcal{C}_{n/q}} \mapsto \mathbb{F}_{q^{t}},$$
		where $t$ is a divisor of $\mathrm{ord}_{n}(q)$ that is divided by $\omega$. Moreover, for any multiple equal-difference representations 
		$$\left(c_{n/q}(\gamma) = \bigsqcup_{j=0}^{\mathrm{gcd}(t_{1},\tau_{\gamma})-1}c_{n/q^{t_{1}}}(\gamma q^{j})\right)_{c_{n/q}(\gamma) \in \mathcal{C}_{n/q}}, \ \left(c_{n/q}(\gamma) = \bigsqcup_{j=0}^{\mathrm{gcd}(t_{2},\tau_{\gamma})-1}c_{n/q^{t_{2}}}(\gamma q^{j})\right)_{c_{n/q}(\gamma) \in \mathcal{C}_{n/q}}$$
		in $\mathcal{MER}_{n/q}$, the first is coarser than the latter if and only if $\mathbb{F}_{q^{t_{1}}} \subseteq \mathbb{F}_{q^{t_{2}}}$.
	\end{description}
\end{theorem}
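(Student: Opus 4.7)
My plan is to obtain both correspondences by composing the bijection of Theorem \ref{thm 3} with the Galois correspondence for subfields of a finite field, with Proposition \ref{prop 2} serving as the bridge that identifies which subfields land in the target.

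For part (1), I would start by recalling that by Galois theory the intermediate fields $\mathbb{F}_{q} \subseteq K \subseteq \mathbb{F}_{q^{\tau}}$ are in order-preserving bijection with the divisors of $\tau$ via $t \mapsto \mathbb{F}_{q^{t}}$, so that $t_{1} \mid t_{2}$ iff $\mathbb{F}_{q^{t_{1}}} \subseteq \mathbb{F}_{q^{t_{2}}}$. The paragraph preceding the theorem further lets me restrict attention to divisors $t$ of $\tau$ when discussing factorizations over $\mathbb{F}_{q^{t}}$. By Proposition \ref{prop 2}, $M_{c_{n/q}(\gamma)}(X)$ factors into irreducible binomials over $\mathbb{F}_{q^{t}}$ iff the $q^{t}$-cyclotomic decomposition of $c_{n/q}(\gamma)$ is a multiple equal-difference representation, and by Theorem \ref{thm 3}(1) the set of such $t \mid \tau$ is precisely $\Sigma(\tau;\omega_{\gamma})$. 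Hence the map $\chi_{c_{n/q}(\gamma)}$, defined by sending $\varphi_{c_{n/q}(\gamma)}(t)$ to $\mathbb{F}_{q^{t}}$, is the composition of the bijection $\varphi_{c_{n/q}(\gamma)}^{-1}$ with the Galois bijection restricted to $\Sigma(\tau;\omega_{\gamma})$, and is therefore a bijection onto the claimed set. The order statement then follows at once: Theorem \ref{thm 3}(1) says $t_{1}\mid t_{2}\iff \varphi_{c_{n/q}(\gamma)}(t_{2})\le\varphi_{c_{n/q}(\gamma)}(t_{1})$, while the Galois correspondence preserves divisibility, so the composition is anti-order-preserving.

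For part (2), I would argue in parallel. Theorem \ref{thm 3}(2) furnishes an anti-order-preserving bijection $\varphi_{n/q}\colon\Sigma(\mathrm{ord}_{n}(q);\omega)\to\mathcal{MER}_{n/q}$. Since the global factorization $X^{n}-1=\prod_{\gamma\in\mathcal{CR}_{n/q}}M_{c_{n/q}(\gamma)}(X)$ over $\mathbb{F}_{q^{t}}$ splits into binomials iff each $M_{c_{n/q}(\gamma)}(X)$ does, applying Proposition \ref{prop 2} coset-by-coset shows that $X^{n}-1$ factors into irreducible binomials over $\mathbb{F}_{q^{t}}$ iff for every $c_{n/q}(\gamma)\in\mathcal{C}_{n/q}$ the $q^{t}$-cyclotomic decomposition is a multiple equal-difference representation — that is, iff the tuple indexed by $\mathcal{C}_{n/q}$ belongs to $\mathcal{MER}_{n/q}$. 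Combined with the Galois correspondence on the subfields of $\mathbb{F}_{q^{\mathrm{ord}_{n}(q)}}$, this yields $\chi_{n/q}$ with the stated anti-order property.

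I do not anticipate a genuine obstacle: the three ingredients — Theorem \ref{thm 3}, Proposition \ref{prop 2}, and the Galois correspondence for finite fields — have all been established or are standard, and the proof is essentially bookkeeping through a commutative square of bijections. The one point that deserves care is the reduction $\mathbb{F}_{q^{t}}\leftrightarrow\mathbb{F}_{q^{\gcd(t,\tau)}}$ (respectively $\mathbb{F}_{q^{\gcd(t,\mathrm{ord}_{n}(q))}}$) that legitimizes restricting the parameter $t$ to divisors of $\tau$ (respectively $\mathrm{ord}_{n}(q)$); once this is invoked from the paragraph immediately preceding the theorem, both the well-definedness and the surjectivity of $\chi$ fall out of the cited results.
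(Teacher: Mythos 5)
Your proposal is correct and follows essentially the same route as the paper: the paper derives Theorem \ref{thm 5} by combining Proposition \ref{prop 2} with the bijections of Theorem \ref{thm 3}, after the same reduction to divisors of $\tau$ (resp.\ $\mathrm{ord}_{n}(q)$) via $t \mapsto \gcd(t,\tau)$ noted in the preceding paragraph. Your write-up merely makes explicit the Galois-correspondence bookkeeping that the paper leaves implicit.
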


\begin{corollary}
	\begin{description}
		\item[(1)] For any positive integer $t$, the induced polynomial $M_{c_{n/q}(\gamma)}(X)$ factors into irreducible binomials over $\mathbb{F}_{q^{t}}$ if and only if $\omega_{\gamma}\mid t$.
		\item[(2)] For any positive integer $t$, $X^{n}-1$ factors into irreducible binomials over $\mathbb{F}_{q^{t}}$ if and only if $\omega\mid t$.
	\end{description}
\end{corollary}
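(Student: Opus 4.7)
The plan is to deduce this corollary essentially as a bookkeeping exercise from Proposition \ref{prop 2} and Theorem \ref{thm 3}(1)/Theorem \ref{thm 5}, extending the statements from divisors of $\tau$ (resp. $\mathrm{ord}_{n}(q)$) to arbitrary positive integers $t$, and then passing from each individual coset to all of $\mathcal{C}_{n/q}$ via the factorization $X^{n}-1 = \prod_{\gamma\in\mathcal{CR}_{n/q}}M_{c_{n/q}(\gamma)}(X)$.

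For part (1), I would first observe that the $q^{t}$-cyclotomic decomposition of $c_{n/q}(\gamma)$ coincides with its $q^{t^{\prime}}$-cyclotomic decomposition for $t^{\prime}=\gcd(t,\tau)$, as was established in the opening discussion of Section \ref{sec 3}. Consequently the irreducible factorizations of $M_{c_{n/q}(\gamma)}(X)$ over $\mathbb{F}_{q^{t}}$ and over $\mathbb{F}_{q^{t^{\prime}}}$ consist of exactly the same polynomials $M_{c_{n/q^{t^{\prime}}}(\gamma q^{j})}(X)$. Applying Proposition \ref{prop 2} (or equivalently Theorem \ref{thm 5}(1)), this common factorization is in binomial form iff $t^{\prime}\in\Sigma(\tau;\omega_{\gamma})$, i.e.\ iff $\omega_{\gamma}\mid t^{\prime}$. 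Since $\omega_{\gamma}\mid\tau$ by construction (Proposition \ref{prop 1}), the divisibility $\omega_{\gamma}\mid\gcd(t,\tau)$ is equivalent to $\omega_{\gamma}\mid t$, which gives the claim.

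For part (2), I would factor $X^{n}-1 = \prod_{\gamma\in\mathcal{CR}_{n/q}}M_{c_{n/q}(\gamma)}(X)$ and apply part (1) to each factor: the polynomial $X^{n}-1$ factors into irreducible binomials over $\mathbb{F}_{q^{t}}$ iff $\omega_{\gamma}\mid t$ for every $q$-cyclotomic coset modulo $n$. It then remains to show that this family of divisibility conditions is equivalent to the single condition $\omega\mid t$. The easy direction uses a primitive coset: choosing $\gamma$ coprime to $n$ forces $n_{\gamma}=n$ and hence $\omega_{\gamma}=\omega$, so the family of conditions implies $\omega\mid t$. For the converse I would verify $\omega_{\gamma}\mid\omega$ for every $\gamma$: since $n_{\gamma}\mid n$, one has $\mathrm{rad}(n_{\gamma})\mid\mathrm{rad}(n)$ and therefore $\mathrm{ord}_{\mathrm{rad}(n_{\gamma})}(q)\mid\mathrm{ord}_{\mathrm{rad}(n)}(q)$.

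The only point that requires a little care — and this is the one step I expect to need explicit checking rather than invocation of a previous result — is that the possible factor of $2$ in $\omega_{\gamma}$ is always absorbed by $\omega$. I would handle this by a short case split on whether $8\mid n_{\gamma}$ and on the congruence class of $q$ modulo $4$. The only nontrivial case is $8\mid n_{\gamma}$ with $q^{\mathrm{ord}_{\mathrm{rad}(n_{\gamma})}(q)}\equiv 3\pmod 4$, which forces $q\equiv 3\pmod 4$ and $\mathrm{ord}_{\mathrm{rad}(n_{\gamma})}(q)$ odd; then either $\mathrm{ord}_{\mathrm{rad}(n)}(q)$ is odd and we get $q^{\mathrm{ord}_{\mathrm{rad}(n)}(q)}\equiv 3\pmod 4$, so $\omega=2\,\mathrm{ord}_{\mathrm{rad}(n)}(q)$ and the divisibility is clear; or $\mathrm{ord}_{\mathrm{rad}(n)}(q)$ is even, in which case $\omega=\mathrm{ord}_{\mathrm{rad}(n)}(q)$ absorbs the extra factor of $2$ because an odd divisor of an even integer allows doubling to still divide it. In every case $\omega_{\gamma}\mid\omega$, completing the proof.
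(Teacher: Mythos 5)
Your proposal is correct, and it follows the route the paper intends: the paper states this corollary without proof as an immediate consequence of Proposition \ref{prop 2} and Theorems \ref{thm 3} and \ref{thm 5}, and your argument is precisely the natural unwinding of those results (reduce $t$ to $t'=\gcd(t,\tau)$, identify membership in $\Sigma(\tau;\omega_{\gamma})$ with $\omega_{\gamma}\mid t$, then pass to all cosets via $X^{n}-1=\prod_{\gamma}M_{c_{n/q}(\gamma)}(X)$). Your explicit verification that $\omega_{\gamma}\mid\omega$, including the case analysis absorbing the possible factor of $2$, is a detail the paper leaves implicit and is handled correctly.
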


\section{Leaders of cyclotomic cosets}\label{sec 5}
In this section, as an application of the multiple equal-difference representations of cyclotomic cosets, we present an 
algorithm to simplify the computation of the leaders of cyclotomic cosets. In particular, the leader of any equal-difference 
coset is determined. Here we introduce the following notation. Let $n$ be a positive integer. For any integer $m$, 
we denote by $m\pmod{n}$ the unique nonnegative integer $m^{\prime}$ less than $n$ such that $m^{\prime}\equiv m\pmod n$.

\begin{lemma}\label{lem 8}
	Let $c_{n/q}(\gamma)$ be a $q$-cyclotomic coset modulo $n$ with $|c_{n/q}(\gamma)|=\tau$. Assume that $c_{n/q}(\gamma)$ is of equal difference. Then the leader of $c_{n/q}(\gamma)$ is $\gamma(\mathrm{mod} \ \frac{n}{\tau})$.
\end{lemma}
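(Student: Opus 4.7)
The plan is to unpack the definition of an equal-difference cyclotomic coset together with the definition of the leader, and match them modulo $\frac{n}{\tau}$. Writing $d=\frac{n}{\tau}$ and letting $\gamma_{0}$ denote the leader of $c_{n/q}(\gamma)$, the equal-difference hypothesis (combined with the discussion immediately after Definition \ref{def 1}, equation \eqref{eq 1}) lets me present the coset as
$$c_{n/q}(\gamma) = \{\gamma_{0}, \gamma_{0}+d, \ldots, \gamma_{0}+(\tau-1)d\} \subseteq \{0,1,\ldots,n-1\}.$$
So the first step will be to simply record this representation.

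Next I would read off two facts from this presentation. First, since all $\tau$ listed elements are required to be nonnegative integers less than $n=\tau d$, the largest one $\gamma_{0}+(\tau-1)d$ must satisfy $\gamma_{0}+(\tau-1)d<\tau d$, which forces $0\le \gamma_{0}<d=\frac{n}{\tau}$. Second, because $\gamma$ itself belongs to $c_{n/q}(\gamma)$, there exists some $0\le k\le \tau-1$ with $\gamma \equiv \gamma_{0}+kd\pmod{n}$; in particular $\gamma \equiv \gamma_{0}\pmod{\frac{n}{\tau}}$.

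Combining these two observations finishes the argument: $\gamma_{0}$ is the unique nonnegative integer in the interval $[0,\frac{n}{\tau})$ that is congruent to $\gamma$ modulo $\frac{n}{\tau}$, so by the notational convention introduced right before the lemma, $\gamma_{0}=\gamma\pmod{\frac{n}{\tau}}$. There is essentially no obstacle here; the proof is a two-line unpacking of Definition \ref{def 1}, and the only thing that needs care is the bookkeeping that distinguishes the residue class $\gamma\in\mathbb{Z}/n\mathbb{Z}$ from its canonical representative in $[0,\frac{n}{\tau})$.
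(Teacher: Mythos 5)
Your proof is correct and follows essentially the same route as the paper's: present the equal-difference coset as $\{\gamma_{0},\gamma_{0}+\frac{n}{\tau},\dots,\gamma_{0}+(\tau-1)\frac{n}{\tau}\}$ with all entries in $\{0,\dots,n-1\}$, deduce $0\le\gamma_{0}<\frac{n}{\tau}$ from the bound on the largest element, and conclude from $\gamma_{0}\equiv\gamma\pmod{\frac{n}{\tau}}$. The only cosmetic difference is that the paper separates out the trivial case $\tau=1$, which your uniform argument also covers.
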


\begin{proof}
	The conclusion is trivial in the case where $\tau=1$. In the following we assume that $\tau > 1$. Write the elements in $c_{n/q}(\gamma)$ as nonnegative integers less than $n$:
	$$c_{n/q}(\gamma) = \{\gamma_{0},\gamma_{0}+\dfrac{n}{\tau},\cdots,\gamma_{0}+(\tau-1)\dfrac{n}{\tau}\},$$
	where $\gamma_{0}$ is the leader of $c_{n/q}(\gamma)$. Since $0 < \gamma_{0}+(\tau-1)\dfrac{n}{\tau} < n$, $\gamma_{0}$ is the unique element in $c_{n/q}(\gamma)$ lying in the range $0 \leq \gamma_{0} < \frac{n}{\tau}$. As $\gamma_{0} \equiv \gamma \pmod{\frac{n}{\tau}}$, the conclusion holds.
\end{proof}

For the general case, let $c_{n/q}(\gamma)$ be a $q$-cyclotomic coset modulo $n$, with $n_{\gamma} = \frac{n}{\mathrm{gcd}(n,\gamma)}$. Let
\begin{equation*}
	\omega_{\gamma} = \left\{
	\begin{array}{lcl}
		2\mathrm{ord}_{\mathrm{rad}(n_{\gamma})}(q), \quad \mathrm{if} \ q^{\mathrm{ord}_{\mathrm{rad}(n_{\gamma})}(q)} \equiv 3 \pmod{4} \ \mathrm{and} \ 8 \mid n_{\gamma};\\
		\mathrm{ord}_{\mathrm{rad}(n_{\gamma})}(q), \quad \mathrm{otherwise}.
	\end{array} \right.
\end{equation*}
Then by Theorem \ref{thm 2}
$$c_{n/q}(\gamma) = \bigsqcup_{j=0}^{\omega_{\gamma}-1}c_{n/q^{\omega_{\gamma}}}(\gamma q^{j})$$
gives the coarsest equal-difference decomposition of $c_{n/q}(\gamma)$. For each component $c_{n/q^{\omega_{\gamma}}}(\gamma q^{j})$, Lemma \ref{lem 8} gives the leader as $\gamma q^{j}(\mathrm{mod} \ \frac{\omega_{\gamma}n}{\tau})$. Then the leader of the whole coset $c_{n/q}(\gamma)$ must be the smallest one among $\gamma q^{j}(\mathrm{mod} \ \frac{\omega_{\gamma}n}{\tau})$, $j=0,1,\cdots,\omega_{\gamma}-1$. Hence we prove the following theorem.

\begin{theorem}\label{thm 6}
	With the notations as above, the leader of the $q$-cyclotomic coset $c_{n/q}(\gamma)$ is 
	$$\mathrm{min}\{\gamma(\mathrm{mod} \ \frac{\omega_{\gamma}n}{\tau}), \gamma q(\mathrm{mod} \ \frac{\omega_{\gamma}n}{\tau}),\cdots,\gamma q^{\omega_{\gamma}-1}(\mathrm{mod} \ \frac{\omega_{\gamma}n}{\tau})\}.$$
\end{theorem}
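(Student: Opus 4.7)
The plan is to combine the coarsest equal-difference decomposition from Proposition \ref{prop 1} with the formula for the leader of an equal-difference coset given in Lemma \ref{lem 8}. In this way the general case is reduced to the equal-difference case, for which the answer is already explicit.

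First, I would invoke Proposition \ref{prop 1} to write
$$c_{n/q}(\gamma) = \bigsqcup_{j=0}^{\omega_{\gamma}-1}c_{n/q^{\omega_{\gamma}}}(\gamma q^{j}),$$
which is a genuine equal-difference decomposition of $c_{n/q}(\gamma)$. Since $\omega_{\gamma}\mid\tau$, the $q^{\omega_{\gamma}}$-cyclotomic decomposition partitions $c_{n/q}(\gamma)$ into $\omega_{\gamma}$ components all of the same cardinality $\tau/\omega_{\gamma}$, so each component is an equal-difference coset whose common difference equals $n/(\tau/\omega_{\gamma}) = \omega_{\gamma}n/\tau$.

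Next, I would apply Lemma \ref{lem 8} to each component $c_{n/q^{\omega_{\gamma}}}(\gamma q^{j})$; this identifies its leader as $\gamma q^{j}(\mathrm{mod}\ \omega_{\gamma}n/\tau)$. Because the leader of the whole coset is by definition the smallest nonnegative representative among all its elements, and the coset is the disjoint union of the components above, this leader must coincide with the minimum of the component leaders, producing the claimed formula.

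The proof is essentially an assembly of earlier results, so the main obstacle is only conceptual: one must check that the components of the coarsest equal-difference decomposition all have the same size $\tau/\omega_{\gamma}$, so that Lemma \ref{lem 8} applies uniformly with the single common difference $\omega_{\gamma}n/\tau$. This uniformity is exactly what follows from the fact that $\omega_{\gamma}\mid\tau$ and hence $\gcd(\omega_{\gamma},\tau)=\omega_{\gamma}$ in the $q^{\omega_{\gamma}}$-cyclotomic decomposition; no further computation is required.
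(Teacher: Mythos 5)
Your proposal is correct and follows essentially the same route as the paper: decompose $c_{n/q}(\gamma)$ via the coarsest equal-difference decomposition $\bigsqcup_{j=0}^{\omega_{\gamma}-1}c_{n/q^{\omega_{\gamma}}}(\gamma q^{j})$, apply Lemma \ref{lem 8} to each component (each of size $\tau/\omega_{\gamma}$ and common difference $\omega_{\gamma}n/\tau$), and take the minimum of the component leaders. Your explicit verification that all components share the same size and common difference is a point the paper leaves implicit, but otherwise the arguments coincide.
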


In the following we exhibit the algorithm with an example.

\begin{examples}
	Let $q = 5$ and $n = 3888 = 2^{4}\cdot 3^{5}$. Now we compute the leader of the cosets $c_{3888/5}(2187)$ and $c_{3888/5}(1001)$. Example $6.1.$ in \cite{Zhu2} gives the representatives and the sizes of all $q$-cyclotomic cosets modulo $n$. In particular, we have $|c_{3888/5}(2187)| =4$ and $|c_{3888/5}(1001)| =324$.
	
	For $\gamma=2187$, $n_{\gamma} = 2^{4} = 16$, therefore $\omega_{\gamma}=1$ and $c_{3888/5}(2187)$ is an equal-difference coset. By Lemma \ref{lem 8} the leader of $c_{3888/5}(2187)$ is 
	$$2187(\mathrm{mod} \ \frac{3888}{4})=243.$$
	
	For $\gamma=1001$, $n_{\gamma} = 3888$, therefore $\omega_{\gamma}=2$. By Theorem \ref{thm 6} the leader of $c_{3888/5}(1001)$ is
	$$\mathrm{min}\{1001(\mathrm{mod} \ \frac{2\cdot3888}{324}), 5005(\mathrm{mod} \ \frac{2\cdot3888}{324})\} = \mathrm{min}\{17,13\} =13.$$
\end{examples}

\section*{Acknowledgment}
This work was supported by Natural Science Foundation of Beijing Municipal(M23017). 

\section*{Data availability}
Data sharing not applicable to this article as no data sets were generated or analysed during the current study.

\section*{Declaration of competing interest}
The authors declare that we have no known competing financial interests or personal relationships that 
could have appeared to influence the work reported in this paper.

\end{document}